\documentclass[12pt]{amsart}
\usepackage{amsmath,amsfonts,amssymb}
\numberwithin{equation}{section}

\newtheorem{theorem}{Theorem}[section]
\newtheorem{lemma}[theorem]{Lemma}

\newtheorem{proposition}[theorem]{Proposition}

\DeclareMathOperator{\Det}{Det} \DeclareMathOperator{\Tr}{Tr}

 \DeclareMathOperator{\Rea}{Re}
 
\DeclareMathOperator{\Diag}{Diag}

\title [Complex Weyl correspondence...]{Complex Weyl correspondence and harmonic representation of $SU(p,q)$}
\author{Benjamin Cahen}
\address{Universit\'e de Lorraine, Site de Metz, UFR-MIM,
D\'epartement de math\'ematiques,
B\^atiment A,
3 rue Augustin Fresnel, BP 45112,
57073 METZ Cedex 03, France.}
\email{benjamin.cahen@univ-lorraine.fr}

\subjclass[2020]{22E46; 47B32; 46E22.} \keywords{
Heisenberg group; complex Weyl correspondence; unitary group;
unitary representation; harmonic representation;
reproducing kernel Hilbert space.  }

\begin{document}

\maketitle

\begin{abstract}
We study the harmonic representation of $SU(p,q)$ in connection to the complex Weyl correspondence on the Fock space. In particular, we give explicit formulas for the complex 
Weyl symbols
of the harmonic representation operators. Similar results are also obtained for the extended
harmonic representation of the semi-direct product of the $(2p+2q+1)$-dimensional
Heisenberg group by $SU(p,q)$. \end{abstract}

\vspace{1cm}

\section {Introduction} \label{sec:1}

Let $p$ and $q$ be positive integers and let $n=p+q$.
Let $H_n$ be the $(2n+1)$-dimensional Heisenberg group. Then
$SU(p,q)$ naturally acts on ${\mathbb C}^n$, hence on $H_n$. We denote by $k\cdot h$ the action of $k\in SU(p,q)$ on $h\in H_n$. Also, for each unitary irreducible representation $\rho$ of $H_n$ and each $k\in SU(p,q)$, we denote by $\rho_k$ the representation of $H_n$ defined
by $\rho_k(h)=\rho(k\cdot h)$ for each $h\in H_n$.

Now, we fix a non degenerate irreducible unitary representation $\rho$ of $H_n$ on the Fock space $\mathcal F$. Then, for each $k\in SU(p,q)$, $\rho_k$ and $\rho$
have the same central character hence are unitarily equivalent.
Thus there exists a family $\sigma(k)$, $k\in SU(p,q)$ of unitary operators on $\mathcal F$ 
such that 
\begin{equation*}\rho_k(h)\sigma(k)=\sigma(k)\rho(h) \end{equation*}
for each $k\in SU(p,q)$ and $h\in H_n$. 
Moreover, one can choose the operators $\sigma(k)$ in such a way that $\sigma$
is a unitary representation of $SU(p,q)$, see \cite{CaN, Da}. Then $\sigma$
is called the {\it harmonic representation} of $SU(p,q)$ and its irreducible components
called the {\it ladder representations} are of some importance both in Mathematics
\cite{KVe} and in Physics \cite{CR1}. Note that $\sigma$ is analogous to the so-called
metaplectic representation of $Sp(n,{\mathbb R})$ which was intensively studied, see for
instance \cite{Fo} and \cite{KVe}. More precisely, $SU(p,q)$ can be embedded in
$Sp(n,{\mathbb R})$ and then $\sigma$ can also be introduced as a restriction of the
metaplectic representation. Consequently, it is possible to study $\sigma$ starting from
known results on the metaplectic representation, see in particular \cite{KVe}. Here, however,
we prefer to consider $\sigma$ independently of the metaplectic representation as in \cite{Da}
and \cite{CaN}.
 
The present paper can be considered as a continuation of \cite{CaN}. In \cite{CaN},
we follow the method indicated in \cite[p. 533]{Ne} and obtain an explicit expression for the Berezin symbol or, equivalently, for the integral kernel of $\sigma(k)$ for $k\in SU(p,q)$, by using the highest weight representations of $H_n\rtimes SU(p,q)$. Then we get formulas which are close to those given in \cite{Da} and \cite{L}.

Here, we give explicit formulas for the complex Weyl symbol of $\sigma(k)$, $k\in SU(p,q)$.
The complex Weyl calculus $W$ can be defined as the correspondence between operators on $\mathcal F$ and functions on ${\mathbb C}^n$ obtained by translating the usual Weyl correspondence by means of the Bargmann transform, see \cite{Fo}.

Our computation of $W(\sigma(k))$ is mainly based on the results of \cite{CaN} and on basic Gaussian calculus. We also give a formula for $W(d\sigma(X))$, $X\in su(p,q)$. Note that similar
results on Weyl symbols of metaplectic representation operators were obtained by various methods, see \cite{CaComp, CR1, CR2, Ho2}. Such formulas for Weyl symbols are useful to study the quantum unitary propagators for quadratic Hamiltonians
which are important  in quantum mechanics, see \cite[Chapter 3]{CR1}.

The extended harmonic representation is the unitary representation $\pi$ of $G:=H_n\rtimes SU(p,q)$ defined by $\pi(h,k)=\rho(h)\sigma(k)$  for each $h\in H_n$ and $k\in SU(p,q)$.
Note that $\pi$ is the exact analogue of the extended metaplectic 
representation \cite[p. 196]{Fo}.
In this paper, we deduce a formula for $W(\pi(g))$, $g\in G$, from the previously established formula
for $W(\sigma(k))$, $k\in SU(p,q)$ by exploiting the covariance of $W$ with respect to
$\rho$, see \cite{CaRiv, CaTs}.

This paper is organized as follows. We first introduce the Heisenberg group, its unitary irreducible representations on $\mathcal F$ and the Berezin correspondence on $\mathcal F$ (Section \ref{sec:2}). Then, we recall the construction of the complex Weyl correspondence $W$
from a Stratonovich-Weyl quantizer defined by using $\rho$
(Section \ref{sec:3}). In Section \ref{sec:4}, we establish the first main result of the paper,
namely some formulas for $W(\sigma(k))$ (Theorem \ref{th:Wsigma}). In Section \ref{sec:5}, we introduce
the extended harmonic representation $\pi$ and give a formula for $W(\pi(g))$. This is the second main result of the paper (Theorem \ref{th:Wpi}). We also give a formula for $W(d\pi(X))$ for
$X$ in the Lie algebra of $G$. See also \cite{CaExtM}, where similar results for the extended
metaplectic representation are established by direct, but tedious, computations.

\section {Berezin quantization on the Fock space} \label{sec:2}

We first introduce the Heisenberg group.
Let $p$ and $q$ be positive integers and let $n=p+q$. For $z
=(z_1,z_2,\ldots, z_n)=\sum_{j=1}^nz_je_j \in {\mathbb C}^n$  and $w=
(w_1,w_2,\ldots, w_n) \in {\mathbb C}^n$, we write $zw:=\sum_{j=1}^n z_jw_j$ for simplicity. For $A\in M_n({\mathbb R})$, we define $Az:=zA^t$  where the subscript $t$ denotes transposition.

Let $J_0=\left(\begin{smallmatrix} I_p&0\\
0&-I_q\end{smallmatrix}\right)$ and let $\omega$ be the symplectic form on ${\mathbb C}^{n}\times {\mathbb
C}^{n}$ defined by
\begin{equation*}\omega
((z,w),(z',w'))=\tfrac{i}{2}(z(J_0w')-z'(J_0w)).
\end{equation*} for $z,w,z',w'\in {\mathbb C}^n$.

The $(2n+1)$-dimensional (real) Heisenberg group is \begin{equation*}
H_n:=\{((z,{\bar z}),c)\,:\,z\in {\mathbb C}^n, c\in {\mathbb R}\}\end{equation*}
equipped with the multiplication defined by
\begin{equation*}((z,{\bar
z}),c)\cdot ((z',{\bar z'}),c')=((z+z',{\bar z}+{\bar
z'}),c+c'+\tfrac{1}{2}\omega ((z,{\bar z}),(z',{\bar
z'}))).\end{equation*} 

For $z\in {\mathbb C}^n$, we write $z=(z^+,z^-)$ where $z^+\in {\mathbb C}^p$ and
$z^-\in {\mathbb C}^q$. Let $s: {\mathbb C}^n\rightarrow {\mathbb C}^n$ be the involution defined by $s(z)=(z^{+},\bar{z}^-)$.

The center of $H_n$ is $C(H_n):=\{((0,0),c)\,:\,c\in{\mathbb R}\}$.
Let $\lambda>0$. By the Stone-von Neumann
theorem, there exists a (unique, up to unitary equivalence) unitary
irreducible representation $\rho_{\lambda}$ of $H_n$ whose restriction to $C(H_n)$ is the character $\chi_{\lambda}:((0,0),c)\rightarrow e^{i\lambda c}$,
see \cite{Fo, Tay}. Then $\rho_{\lambda}$ is called a \textit{generic} representation of $H_n$. The general method for constructing holomorphic representations of Hermitian Lie groups developped in \cite{Ne} when applied to $H_n$ leads to introduce the Fock space, that is,
the Hilbert space
${\mathcal F}_{\lambda}$ consisting of all holomorphic functions on ${\mathbb C}^n$ such
that
\begin{equation*}\Vert f\Vert
^2:=\left(\tfrac{\lambda}{2\pi}\right)^n
\int_{{\mathbb C}^n}\,\vert f(z)\vert ^2\,
e^{-\lambda \vert z\vert ^2/2}\,dm(z)<+\infty \end{equation*}
where $dm(z):=dx\,dy$ is the Lebesgue measure on ${\mathbb C}^n$ ($z=x+iy$, $x,y \in {\mathbb R}^n$) and the realization of $\rho_{\lambda}$ on ${\mathcal F}_{\lambda}$
given by
 \begin{equation*}(\rho_{\lambda}(g)f)(z)=e^{i\lambda c_0}\,
\exp \lambda\left( \tfrac{1}{2}\overline{s(z_0)}z-\tfrac{1}{4}\vert z_0\vert^2\right)\, f(z-s(z_0))
\end{equation*}
for $g=((z_0,{\bar z_0}),c_0)\in H_n$ and $z\in {\mathbb C}^n$. See \cite{CaN} for details.

Recall that ${\mathcal F}_{\lambda}$ is a reproducing kernel Hilbert space. More precisely, for each $z\in {\mathbb C}^n$, let $e_z$ be the function of ${\mathcal F}_{\lambda}$ defined by
$e_z(w)=\exp (\lambda{\bar z}w/2)$. Then we have the reproducing property
$f(z)=\langle f,e_z\rangle $ for each $f\in {\mathcal F}_{\lambda}$.

Now, we introduce the Berezin
calculus on ${\mathcal F}_{\lambda}$ \cite{Be1, Be2, CaRiv}.
The Berezin (covariant) symbol of the operator $A$ on ${\mathcal F}_{\lambda}$ is the
function $S(A)$ defined on ${\mathbb C}^n$ by
\begin{equation*}S(A)(z):=\frac{\langle A\,e_z\,,\,e_z\rangle}{\langle e_z\,,\,e_z\rangle} \end{equation*} and the double Berezin symbol of $A$ is the function $S'(A)$ on ${\mathbb C}^n\times {\mathbb C}^n$ defined by
\begin{equation*}S'(A)(z,w):=\frac{\langle A\,e_w\,,\,e_z\rangle}{\langle e_w\,,\,e_z\rangle}. \end{equation*}

On the one hand, by the reproducing property, for each operator $A$ on  
${\mathcal F}_{\lambda}$, $f\in {\mathcal F}_{\lambda}$ and $z\in {\mathbb C}^n$,
we have
\begin{equation*}(Af)(z)=\langle Af,e_z\rangle= \langle f,A^{\ast}e_z\rangle.
\end{equation*}
This shows that the kernel $k_A(z,w)$ of $A$ is given by
\begin{equation*}k_A(z,w)=\overline{(A^{\ast}e_z)(w)}=\langle Ae_w,e_z\rangle
=S'(A)(z,w)\langle e_w,e_z\rangle.\end{equation*}

On the other hand, $S'(A)(z,w)$ is holomorphic in the variable $z$ and anti-holomorphic in the variable $w$, then $S'(A)$ is determined by its restriction to the diagonal of ${\mathbb C}^n \times {\mathbb C}^n$, that is, by $S(A)$. Consequently, $A$ is determined by $S(A)$.
We have the following result, see \cite{Be1, CaRiv, CaTs, UU}.
\begin{proposition} \label{prop:Ber}
\begin{enumerate}
\item For each operator $A$ on ${\mathcal F}_{\lambda}$ and each $z\in {\mathbb C}^n$, we
have $S(A^{\ast})(z)=\overline {S(A)(z)}$;
 
\item Consider the action of $H_n$ on ${\mathbb C}^n$ defined by
$h\cdot z =z+s(z_0)$ for $h=((z_0,\bar{z_0}),c_0)\in H_n$ and $z\in {\mathbb C}^n$. 
Then,  for each operator $A$ on ${\mathcal F}_{\lambda}$, $h\in H_n$ and 
$z\in {\mathbb C}^n$, we have

\begin{equation*}S(A)(h\cdot z)=S(\rho_{\lambda}(h)^{-1}A\rho_{\lambda}(h))(z);\end{equation*}

\item Denote by ${\mathcal L}_2({{\mathcal F}_{\lambda}})$ the space of all Hilbert-Schmidt operators on ${\mathcal F}_{\lambda}$ (endowed with the
Hilbert-Schmidt norm). Then $S$ is a bounded operator from
${\mathcal L}_2({\mathcal F}_{\lambda})$ to $L^2({\mathbb C}^n,\mu_{\lambda})$  which
is one-to-one and has dense range.
\end{enumerate} \end{proposition}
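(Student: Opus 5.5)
Part (1) is immediate from the definition. Since $\langle A^{\ast}e_z,e_z\rangle=\langle e_z,Ae_z\rangle=\overline{\langle Ae_z,e_z\rangle}$ and $\langle e_z,e_z\rangle=e^{\lambda|z|^2/2}$ is real and positive, dividing gives $S(A^{\ast})(z)=\overline{S(A)(z)}$.

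For part (2), the key step is a covariance formula for the coherent states. Take $h=((z_0,\bar z_0),c_0)$ and compute $\rho_\lambda(h)e_z$ from the explicit formula for $\rho_\lambda$:
\begin{equation*}(\rho_\lambda(h)e_z)(w)=e^{i\lambda c_0}\exp\lambda\left(\tfrac12\overline{s(z_0)}w-\tfrac14|z_0|^2\right)\exp\left(\tfrac{\lambda}{2}\bar z(w-s(z_0))\right).\end{equation*}
Since $|s(z_0)|=|z_0|$, this equals $\theta(h,z)\,e_{z+s(z_0)}(w)=\theta(h,z)\,e_{h\cdot z}(w)$, where $\theta(h,z)$ is a nonzero constant independent of $w$.

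Because $\rho_\lambda(h)$ is unitary, we then get
\begin{equation*}\langle e_{h\cdot z},e_{h\cdot z}\rangle=|\theta(h,z)|^{-2}\langle e_z,e_z\rangle .\end{equation*}
Using this,
\begin{equation*}S(A)(h\cdot z)=\frac{|\theta|^{-2}\langle A\rho_\lambda(h)e_z,\rho_\lambda(h)e_z\rangle}{|\theta|^{-2}\langle e_z,e_z\rangle}=S(\rho_\lambda(h)^{-1}A\rho_\lambda(h))(z).\end{equation*}
The only care needed is bookkeeping of the phase $\theta$, which cancels because it appears as $|\theta|^2$ in both numerator and denominator.

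Part (3) is the main obstacle. I interpret $\mu_\lambda$ as the Liouville measure $(\lambda/2\pi)^n dm$; it is used without having been defined earlier, so this choice should be stated explicitly. For a Hilbert-Schmidt operator $A$, the kernel satisfies
\begin{equation*}\|A\|_{HS}^2=\int\!\!\int |k_A(z,w)|^2 e^{-\lambda|z|^2/2}e^{-\lambda|w|^2/2}\,d\mu_\lambda(z)\,d\mu_\lambda(w).\end{equation*}

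To get boundedness, write
\begin{equation*}S(A)(z)=\int k_A(z,w)\,e_z(w)^{\!*}\ldots\end{equation*}
More concretely, $S(A)(z)=\langle Ae_z,e_z\rangle/\|e_z\|^2=\Tr(A P_z)$, where $P_z$ is the rank-one projection onto $e_z$. Cauchy--Schwarz alone only gives a pointwise bound by $\|A\|$, which is not enough. Instead, I would show that $S=T\circ\iota$, where $\iota$ maps $A$ isometrically to its normalized kernel in $L^2(\mu_\lambda\otimes\mu_\lambda)$ and $T$ is restriction-averaging. The cleanest route is to express the Berezin transform $B=SS^{\ast}$ as convolution with the Gaussian $e^{-\lambda|z-w|^2/2}$. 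Its $L^1(\mu_\lambda)$ norm equals $1$, so Young's inequality gives $\|B\|\le 1$, and hence $\|S\|\le 1$.

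Injectivity of $S$ follows from the holomorphy argument already noted in the text: $S(A)=0$ forces $S'(A)=0$, hence $k_A=0$ and $A=0$. Dense range is equivalent to injectivity of $S^{\ast}$. One computes
\begin{equation*}S^{\ast}F=\int F(z)\,\|e_z\|^{-2}\langle\cdot,e_z\rangle e_z\,d\mu_\lambda(z),\end{equation*}
the Toeplitz operator with symbol $F$. If $S^{\ast}F=0$, then $\langle S^{\ast}F,B\rangle_{HS}=0$ for every finite-rank $B$, so $\langle F,S(B)\rangle=0$. Taking $B=\langle\cdot,e_w\rangle e_{w'}$ shows that $F$ is orthogonal to all functions $z\mapsto e^{\lambda(\bar z w'+ z\bar w)/2}e^{-\lambda|z|^2/2}$. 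The span of these is dense in $L^2(\mu_\lambda)$ by a Fourier/Gaussian completeness argument, which forces $F=0$.
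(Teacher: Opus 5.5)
Your proofs of (1) and (2) are the paper's. The paper obtains (2) from exactly your identity $\rho_\lambda(h)e_z=\theta(h,z)\,e_{h\cdot z}$, which is \eqref{eq:rez} with $\theta(h,z)=\exp(i\lambda c_0-\tfrac{\lambda}{2}s(z_0)\bar z-\tfrac{\lambda}{4}|z_0|^2)$, and (1) is immediate.

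For (3) the paper gives no argument; it defers to the literature, pointing only to the picture recalled in Section~\ref{sec:3}, where $\mathcal L_2(\mathcal F_\lambda)\cong\mathcal F_\lambda\otimes\overline{\mathcal F_\lambda}$ and $S$ is restriction to the diagonal, and to the Gaussian kernel of $B$. Your route is therefore a genuinely self-contained alternative, and its ingredients are the right ones:
the Gram kernel $\langle P_z,P_w\rangle_{HS}=e^{-\lambda|z-w|^2/2}$ together with $S(A)(z)=\langle A,P_z\rangle_{HS}$; the holomorphy argument for injectivity; and Fourier completeness for $\ker S^{\ast}=0$. The last step works: taking $w'=-w$ makes the exponent $\tfrac{\lambda}{2}(z\bar w-\bar z w)$ purely imaginary, so orthogonality says the Fourier transform of $Fe^{-\lambda|z|^2/2}\in L^1$ vanishes. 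Reading $\mu_\lambda$ as $m_\lambda$ is correct.

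The boundedness step, however, is circular as written. The adjoint $S^{\ast}$, and hence $B=SS^{\ast}$, only makes sense once you know that $S$ maps $\mathcal L_2(\mathcal F_\lambda)$ boundedly into $L^2$. So ``$\|B\|\le 1$, hence $\|S\|\le 1$'' proves nothing yet. Reverse the order, as follows.
For bounded compactly supported $F$, set $T_F:=\int F(z)P_z\,dm_\lambda(z)$. This is a Bochner integral in the trace class, since each $P_z$ has trace norm $1$. Then
\begin{equation*}
\|T_F\|_{HS}^2=\int\!\!\int F(z)\overline{F(w)}\,e^{-\lambda|z-w|^2/2}\,dm_\lambda(z)\,dm_\lambda(w)\le\|F\|^2
\end{equation*}
by Young's inequality, and $\langle A,T_F\rangle_{HS}=\int S(A)\,\overline{F}\,dm_\lambda$ for every $A\in\mathcal L_2(\mathcal F_\lambda)$.
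Since $|S(A)(z)|\le\|A\|$, you may test with $F=S(A)\chi_K$ for compact $K$. This gives $\int_K|S(A)|^2\,dm_\lambda\le\|A\|_{HS}^2$, hence $\|S(A)\|\le\|A\|_{HS}$. Only then do $S^{\ast}F=T_F$ and $SS^{\ast}=B$ follow.

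Also delete the abandoned fragments: the unfinished integral for $S(A)(z)$ and the factorization $S=T\circ\iota$. Boundedness of ``restriction to the diagonal'' is exactly what has to be proved, so invoking it gives nothing.
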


Note that the second assertion of the proposition immediatly follows from the following
relation. For each $h=((z_0,\bar{z_0}),c_0)\in H_n$ and $z\in {\mathbb C}^n$, we have
\begin{equation} \label{eq:rez} \rho_{\lambda}(h)e_z=\exp \left(i\lambda c_0-\tfrac{\lambda}{2}s(z_0){\bar z}-\tfrac{\lambda}{4}\vert z_0\vert^2\right)\,e_{h\cdot z}.
\end{equation}

Let us also recall the Berezin transform.
Let $S^{\ast}$ be the adjoint operator of $S:{\mathcal L}_2({\mathcal F}_{\lambda}) \rightarrow L^2({\mathbb C}^n,m_{\lambda})$ where $m_{\lambda}$ is the measure on ${\mathbb C}^n$ defined by $dm_{\lambda}(z)=({2\pi})^{-n}{\lambda}^n dm(z)$. Then
the Berezin transform is the operator $B$
on $L^2({\mathbb C}^n,m_{\lambda})$ defined by $B:=SS^{\ast}$.
Hence we have 
the integral formula
\begin{equation*}(Bf)(z)=\int_{{\mathbb C}^n}\,f(w)
\,e^{ -\lambda\vert z-w\vert^2/2}\,dm_{\lambda}(w),\end{equation*}
see \cite{Be1, Be2, UU}.
It is also known that $B=\exp (\Delta/2\lambda)$ where
$\Delta=4\sum_{k=1}^n\partial^2/\partial z_k\partial {\bar z}_k$,
see \cite{Luo, UU}.

\section{Complex Weyl correspondence on the Fock space} \label{sec:3}

As the classical Weyl correspondence, the complex Weyl correspondence can be constructed from a \textit{Stratonovich-Weyl quantizer}, see \cite{AU1, GB}. Here we follow more or less the exposition of \cite{CaTs}, see also \cite{CaTo}.

We start from the parity operator $R$ on ${\mathcal F}_{\lambda}$ defined by
\begin{equation*}(Rf)(z)=2^nf(-z). \end{equation*}

The map $z\rightarrow h_z:=((s(z),\overline{s(z)}),0)$ is a section
for the action of $H_n$ on ${\mathbb C}^n$, that is, we have $h_z \cdot 0=z$
for each $z\in {\mathbb C}^n$.
Let us define, for each $z\in {\mathbb C}^n$,
\begin{equation*}\Omega(z):=\rho_{\lambda}(h_z)R\rho_{\lambda}(h_z)^{-1}. \end{equation*} 

By an easy computation, we get
\begin{equation*}
(\Omega(z)f)(w)=2^n\exp \left(\lambda ({\bar z}w-\vert z\vert^2)\right) f(2z-w) \end{equation*}
for each $z, w\in {\mathbb C}^n$. 
The map $\Omega$ is called a \textit{Stratonovich-Weyl quantizer} and satisfies the covariance property
\begin{equation*}\Omega(h\cdot z)=\rho_{\lambda}(h)\Omega(z)\rho_{\lambda}(h)^{-1} \end{equation*} for each $h\in H_n$ and $z \in {\mathbb C}^n$.

For each trace-class operator $A$ on ${\mathcal F}_{\lambda}$, we define the function $W(A)$ on ${\mathbb C}^n$  by
\begin{equation*} W(A)(z):=\Tr(A\Omega(z)).\end{equation*}

\begin{proposition} \label{prop:intW} \rm{\cite{CaTo}} For each trace-class operator $A$ on
${\mathcal F}_{\lambda}$
and each $z \in {\mathbb C}^n$, we have
\begin{equation} \label{eq:intW}
W(A)(z)=2^n\int_{{\mathbb C}^n}k_A(z+w,z-w)\exp \left(\tfrac{\lambda}{2}\left(-z{\bar z}-w{\bar w}+z{\bar w}-{\bar z}w\right)\right) dm_{\lambda}(w).\end{equation}\end{proposition}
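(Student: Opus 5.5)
We compute $\Tr(A\Omega(z))$ by writing $A\Omega(z)$ as an integral operator on ${\mathcal F}_{\lambda}$ and integrating its kernel along the diagonal against the Fock measure. First we find the kernel of $\Omega(z)$. From the formula for $\Omega(z)$ and the reproducing property, for $u\in{\mathbb C}^n$ we have
\begin{equation*}
(\Omega(z)e_u)(w)=2^n\exp\left(\lambda(\bar z w-\vert z\vert^2)\right)\exp\left(\tfrac{\lambda}{2}\bar u(2z-w)\right).
\end{equation*}
Hence the kernel of $\Omega(z)$ is $k_{\Omega(z)}(w,u)=\langle \Omega(z)e_u,e_w\rangle=(\Omega(z)e_u)(w)$. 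Since $A$ is trace class, $A\Omega(z)$ is trace class: $\Omega(z)$ is bounded, being $2^n$ times a unitary operator conjugated by unitaries.

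For a trace-class operator $T$ on the Fock space, we use
\begin{equation*}
\Tr(T)=\int_{{\mathbb C}^n}k_T(w,w)\,e^{-\lambda\vert w\vert^2/2}\,dm_{\lambda}(w),
\end{equation*}
which follows from $\Tr(T)=\int S(T)(w)\,dm_\lambda(w)$. That identity in turn comes from the resolution of identity $\int \vert e_w\rangle\langle e_w\vert\, e^{-\lambda|w|^2/2}dm_\lambda(w)=I$, itself a consequence of the reproducing property. The kernel of the product is
\begin{equation*}
k_{A\Omega(z)}(w,w)=(A\Omega(z)e_w)(w).
\end{equation*}
The cleanest route is to move $\Omega(z)$ across: $(\Omega(z)^{*}e_w)$ is explicit, and since $\Omega(z)$ is self-adjoint (it is a conjugate of $R$, and $R$ is self-adjoint), we get $k_{A\Omega(z)}(w,w)=\langle \Omega(z)e_w,A^{*}e_w\rangle$. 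Writing $\langle \Omega(z) e_w, A^*e_w\rangle=\int (\Omega(z)e_w)(u)\,\overline{(A^*e_w)(u)}\,e^{-\lambda|u|^2/2}dm_\lambda(u)$ and using $\overline{(A^*e_w)(u)}=k_A(w,u)$ gives a double integral over $(w,u)$.

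An alternative is to compute $(A\Omega(z)f)(w)=\int k_A(w,u)(\Omega(z)f)(u)e^{-\lambda|u|^2/2}dm_\lambda(u)$ with $f=e_w$, then set the outer variable equal to $w$ and integrate. Either way we obtain
\begin{equation*}
W(A)(z)=2^n\iint k_A(w,u)\,\exp\left(\lambda(\bar z u-\vert z\vert^2)+\tfrac{\lambda}{2}\bar w(2z-u)\right)e^{-\lambda(\vert u\vert^2+\vert w\vert^2)/2}\,dm_\lambda(u)\,dm_\lambda(w).
\end{equation*}
The integrand is holomorphic in $w$ through $k_A(w,u)$ and through $e^{\lambda\bar w(2z-u)/2}$ after conjugation. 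So the $w$-integral is a reproducing-kernel integral: $\int g(w)\,\overline{e_{v}(w)}\,e^{-\lambda|w|^2/2}dm_\lambda(w)=g(v)$ with $\bar v$ matched to $2z-u$. Concretely, the $w$-integration replaces $w$ by $2z-u$ (more precisely, $k_A(\cdot,u)$ is evaluated at the point $v$ with $\lambda\bar w v/2$ matching). This collapses the double integral to a single integral over $u$:
\begin{equation*}
W(A)(z)=2^n\int k_A(2z-u,u)\exp\left(\lambda(\bar z u-\vert z\vert^2)-\tfrac{\lambda}{2}\vert u\vert^2\right)dm_\lambda(u).
\end{equation*}

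Finally we substitute $u=z-w$, which preserves $dm_\lambda$, so that $2z-u=z+w$. Expanding the exponent gives
\begin{equation*}
\bar z(z-w)-z\bar z-\tfrac12(z-w)(\bar z-\bar w)=\tfrac12\left(-z\bar z-w\bar w+z\bar w-\bar z w\right),
\end{equation*}
which is exactly the exponent in \eqref{eq:intW}. The main obstacle is the collapse step. We must identify the holomorphic and antiholomorphic dependence correctly so that the reproducing property applies. We must also justify exchanging the order of integration, which holds by absolute convergence: for trace-class, and indeed Hilbert–Schmidt, $A$, the kernel $k_A$ is square-integrable against the Gaussian weights. If this is delicate, I would first prove the formula for finite-rank $A=\vert f\rangle\langle g\vert$ with $f,g$ polynomials, where all integrals are explicit Gaussians, and then extend to all trace-class $A$ by continuity of both sides in trace norm.
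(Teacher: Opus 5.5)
The paper gives no proof of this proposition; it quotes it from \cite{CaTo}, so your argument has to stand on its own. Its computational content is correct. You correctly derive the kernel $k_{\Omega(z)}(w,u)=(\Omega(z)e_u)(w)$ and the trace formula $\Tr(T)=\int k_T(w,w)e^{-\lambda\vert w\vert^2/2}\,dm_\lambda(w)$ for trace-class $T$. The collapse of the $w$-integral to $k_A(2z-u,u)$ and the substitution $u=z-w$ with the resulting exponent also check out. One small slip: since $\overline{e_v(w)}=e^{\lambda v\bar w/2}$, the matching is $v=2z-u$, not $\bar v=2z-u$, although your final formula is right.

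The weak spot is the justification of the interchange of integrals. Square-integrability of $k_A$ against the Gaussian weights, which is all that Hilbert--Schmidt gives you, does not suffice. After splitting off $e^{-\lambda(\vert u\vert^2+\vert w\vert^2)/4}$ onto $k_A$, the remaining factor has modulus
\begin{equation*}
2^n\bigl\vert\exp\bigl(\lambda(\bar z u-\vert z\vert^2)+\tfrac{\lambda}{2}\bar w(2z-u)\bigr)\bigr\vert\,e^{-\lambda(\vert u\vert^2+\vert w\vert^2)/4}=2^n e^{-\lambda\vert u+w-2z\vert^2/4}.
\end{equation*}
This depends only on $u+w$ and is not in $L^2({\mathbb C}^{2n})$, so Cauchy--Schwarz against $\|A\|_{HS}$ yields nothing. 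Moreover, for a merely Hilbert--Schmidt $A$ the trace $\Tr(A\Omega(z))$ is not even defined.

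Absolute convergence does hold for trace-class $A$, but you must use $A=\sum_j s_j\langle\cdot,g_j\rangle f_j$ with $\sum_j s_j<\infty$. For each rank-one term, Young's inequality ($L^2*L^1\subset L^2$) then bounds the double integral by $C\|f_j\|\,\|g_j\|$. Your fallback also works, provided you actually prove that the right-hand side is bounded by $2^n\|A\|_1$. This follows because the modulus of the Gaussian factor is
\begin{equation*}
\bigl\vert\exp\bigl(\tfrac{\lambda}{2}(-z\bar z-w\bar w+z\bar w-\bar zw)\bigr)\bigr\vert=e^{-\lambda(\vert z+w\vert^2+\vert z-w\vert^2)/4},
\end{equation*}
after which Cauchy--Schwarz gives the bound $2^n\|f_j\|\,\|g_j\|$ for each rank-one term.

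Simplest of all is to avoid the double integral entirely. Since $\Omega(z)$ is bounded, $\Tr(A\Omega(z))=\Tr(\Omega(z)A)$. Since $\Omega(z)$ acts as a weighted composition operator,
\begin{equation*}
k_{\Omega(z)A}(w,w)=(\Omega(z)Ae_w)(w)=2^n e^{\lambda(\bar z w-\vert z\vert^2)}\,k_A(2z-w,w).
\end{equation*}
Inserting this into the trace formula gives your single-integral expression in one line, with no Fubini argument and no reproducing-kernel collapse. Placing $\Omega(z)$ on the right of $A$ is what created the detour.
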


We can use this integral formula in order to define $W(A)$ for more general operators
on ${\mathcal F}_{\lambda}$  as for instance Hilbert-Schmidt operators or even differential
operators with polynomial coefficients, see \cite{AU1}, \cite[Proposition 4]{CaTo}.

\begin{proposition} \label{prop:unitary} The map $W:{\mathcal L}_2({\mathcal F}_{\lambda})\rightarrow L^2({\mathbb C}^n,m_{\lambda})$ is the unitary part in the polar decomposition of $S$, that is we have $S=(SS^{\ast})^{1/2}W=B^{1/2}W$.\end{proposition}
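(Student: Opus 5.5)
The plan is to use the structure of the polar decomposition of $S$. By Proposition \ref{prop:Ber}(3), $S$ is bounded, injective and has dense range, so its polar decomposition $S=U|S^{\ast}|$, or equivalently $S=(SS^{\ast})^{1/2}U$, has a unitary factor $U:{\mathcal L}_2({\mathcal F}_{\lambda})\rightarrow L^2({\mathbb C}^n,m_{\lambda})$. This factor is uniquely determined by the requirement $S=B^{1/2}U$. Indeed, $B^{1/2}$ is injective because $B=SS^{\ast}$ and $S^{\ast}$ has dense range. So it suffices to prove two things: $W$ extends to an isometry from ${\mathcal L}_2({\mathcal F}_{\lambda})$ onto $L^2({\mathbb C}^n,m_{\lambda})$, and $B^{1/2}W=S$ on a dense subspace.

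\textbf{Step 1: the identity $B^{1/2}W=S$.} I would check it on the rank-one operators $A=f\otimes \bar g$, or more conveniently on operators whose kernels are Gaussians $k_A(z,w)=\langle e_b,e_z\rangle\overline{\langle e_a,e_w\rangle}$, i.e.\ $A=e_b\otimes\overline{e_a}$. Their linear span is dense in ${\mathcal L}_2$.
\begin{itemize}
\item For such $A$, $S(A)(z)$ is an explicit Gaussian: $\exp\big(\tfrac{\lambda}{2}(\bar z b+\bar a z-|z|^2)\big)$.
\item Using Proposition \ref{prop:intW}, $W(A)(z)$ is a Gaussian integral in $w$. I expect the result $2^n\exp\big(\lambda(\bar z b+\bar a z-|z|^2-\tfrac12\bar a b)\big)$, up to constants.
\item Then I apply $B^{1/2}=\exp(\Delta/4\lambda)$, which is convolution with the heat kernel $(2)^n e^{-\lambda|z-w|^2}\,dm_\lambda(w)$, to this Gaussian. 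Another Gaussian integral should return exactly $S(A)$.
\end{itemize}
Equivalently, and more cleanly, one can compare Fourier transforms. The operator $B^{1/2}$ multiplies the Fourier transform by $e^{-|\xi|^2/4\lambda}$-type factors, and the Gaussians $W(A)$ and $S(A)$ differ exactly by such a factor.

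\textbf{Step 2: $W$ is unitary.} I would compute $\langle W(A),W(A')\rangle_{L^2(m_\lambda)}$ for two such Gaussian operators and compare it with $\Tr(A'^{\ast}A)=\langle b,b'\rangle$-type products of reproducing kernels. Again this is a Gaussian integral, and it should give $\langle e_b,e_{b'}\rangle\langle e_{a'},e_a\rangle$. This shows that $W$ is isometric on a dense set, so it extends to an isometry.

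Surjectivity then follows from Step 1 without further computation. Since $B^{1/2}W=S$, we get $S^{\ast}=W^{\ast}B^{1/2}$, hence
\[
B=SS^{\ast}=B^{1/2}WW^{\ast}B^{1/2},
\]
so $WW^{\ast}=I$ on the dense range of $B^{1/2}$, and therefore $W$ is onto. Uniqueness of the polar decomposition then gives the claim.

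An alternative route for Step 2 is covariance. Both $W$ and $S$ intertwine the action $A\mapsto\rho_\lambda(h)A\rho_\lambda(h)^{-1}$ with translations. On the other hand, $|S^{\ast}|$ commutes with the translations, so it is a Fourier multiplier, and the two sides can be identified symbol-wise.

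\textbf{Main obstacle.} I expect the hardest part to be the bookkeeping in the Gaussian integrals: getting the normalisations $2^n$ and $(\lambda/2\pi)^n$ right, and evaluating complex Gaussians with nonsymmetric quadratic forms such as $z\bar w-\bar z w$. The key integral is
\[
\int e^{-\lambda|w|^2/2+\alpha w+\beta\bar w}\,dm_\lambda(w)=2^n e^{2\alpha\beta/\lambda}.
\]
I would apply this formula repeatedly and check the constants carefully against the case $A=e_0\otimes\overline{e_0}$ first.
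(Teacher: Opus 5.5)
\textbf{Verdict: correct, by a different route.} The paper gives no proof of its own here. It refers to \cite{OZ, Luo, CaRiv, CaTs} and only indicates the structural viewpoint behind those proofs. Under $\mathcal L_2(\mathcal F_\lambda)\cong\mathcal F_\lambda\otimes\overline{\mathcal F_\lambda}$, the map $S$ becomes restriction to the diagonal and $SS^{\ast}=B$ is translation-invariant. One can then even go the other way: define $W$ as the polar factor and derive Eq.~(\ref{eq:intW}) from $B=\exp(\Delta/2\lambda)$.

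Your route runs in the opposite direction and is more elementary. You take Eq.~(\ref{eq:intW}) as given and check $B^{1/2}W=S$ and the isometry property on the total family $e_b\otimes\overline{e_a}$ by explicit Gaussian integrals. The computations check out:
\begin{itemize}
\item For $k_A(z,w)=\langle e_b,e_z\rangle\overline{\langle e_a,e_w\rangle}$ one finds $W(A)(z)=2^n\exp\bigl(\lambda(\bar b z+a\bar z-|z|^2-\tfrac12 a\bar b)\bigr)$, which agrees with $\Tr(A\Omega(z))=(\Omega(z)e_b)(a)$.
\item Applying $B^{1/2}$ to this gives exactly $S(A)$.
\item $\langle W(A),W(A')\rangle=\exp\bigl(\tfrac{\lambda}{2}(\bar b b'+\bar a' a)\bigr)=\Tr(A'^{\ast}A)$.
\end{itemize}
Your approach gives a self-contained argument that needs only Proposition~\ref{prop:intW} and closed-form Gaussians. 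The structural arguments avoid special test operators and extend to settings where such closed forms are not available.

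You can also shorten your proof. Since $S=B^{1/2}V$ with $V$ the unitary polar factor, Step~1 alone gives $B^{1/2}\bigl(W(A)-V(A)\bigr)=0$. Hence $W=V$ on the dense span, so Step~2 and the surjectivity argument are superfluous.

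\textbf{Slips to fix.}
\begin{itemize}
\item Since $m_\lambda$ is normalized, $\int e^{-c|w|^2+\alpha w+\beta\bar w}\,dm_\lambda(w)=(\lambda/2c)^n e^{\alpha\beta/c}$. For $c=\lambda/2$ this is $e^{2\alpha\beta/\lambda}$, without your factor $2^n$; with that factor you would get $4^n$ in $W(A)$. For $c=2\lambda$, the factor $4^{-n}$ is what cancels the two prefactors $2^n$ in Step~1.
\item Your formulas for $S(A)$ and $W(A)$ are those of $e_a\otimes\overline{e_b}$ rather than $e_b\otimes\overline{e_a}$. This is harmless, since they are consistent with each other.
\item Injectivity of $B^{1/2}$ comes from $\ker B=\ker S^{\ast}=(\text{range of }S)^{\perp}=\{0\}$. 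That is, it uses the density of the range of $S$, not of $S^{\ast}$.
\item $S=U|S^{\ast}|$ should read $S=|S^{\ast}|U$.
\end{itemize}
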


Various proofs of this proposition can be found in \cite{OZ, Luo, CaRiv} and \cite{CaTs}.
Note that, in \cite{OZ}, the space ${\mathcal L}_2({\mathcal F}_{\lambda})$ is identified
with the tensor product ${\mathcal F}_{\lambda}\otimes \overline {{\mathcal F}_{\lambda}}$
of ${\mathcal F}_{\lambda}$ with its conjugate. Then $S$ can be seen as the restriction map
\begin{equation*}f(z,w) \rightarrow f(z,z)\exp (-\lambda \vert z\vert^2/2). \end{equation*}
Note that the Weyl correspondence introduced in \cite{OZ} is the inverse map of $W$.

An alternative method is to define $W$ (on Hilbert-Schmidt operators) as 
the unitary part in the polar decomposition of $S:{\mathcal L}_2({\mathcal F}_{\lambda})\rightarrow L^2({\mathbb C}^n,m_{\lambda})$ and then to obtain Eq. \ref{eq:intW}
by using the properties of $B$, see Section \ref{sec:2}.

Now, we can easily verify the following proposition which will be used later.

\begin{proposition} \label{prop:covWrho} \rm{\cite{CaTo}} 
The map $W$ is covariant with respect to $\rho_{\lambda}$, that is,
for each operator $A$ on ${\mathcal F}_{\lambda}$, $h\in H_n$ and 
$z\in {\mathbb C}^n$, we have
\begin{equation*}W(A)(h\cdot z)=W(\rho_{\lambda}(h)^{-1}A\rho_{\lambda}(h))(z).
\end{equation*} \end{proposition}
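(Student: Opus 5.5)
The plan is to derive the covariance of $W$ directly from the covariance of the Stratonovich-Weyl quantizer $\Omega$ and the cyclicity of the trace. Fix $h\in H_n$ and $z\in {\mathbb C}^n$, and first treat a trace-class operator $A$. By definition and the covariance property $\Omega(h\cdot z)=\rho_{\lambda}(h)\Omega(z)\rho_{\lambda}(h)^{-1}$, we have
\begin{equation*}
W(A)(h\cdot z)=\Tr\bigl(A\rho_{\lambda}(h)\Omega(z)\rho_{\lambda}(h)^{-1}\bigr).
\end{equation*}
Since $A$ is trace class and the other factors are bounded ($\rho_{\lambda}(h)$ is unitary and $\Omega(z)$ is $2^n$ times a unitary operator, being conjugate to $R$ with $2^{-n}R$ unitary), we may cycle the trace. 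This gives
\begin{equation*}
\Tr\bigl(\rho_{\lambda}(h)^{-1}A\rho_{\lambda}(h)\Omega(z)\bigr)=W(\rho_{\lambda}(h)^{-1}A\rho_{\lambda}(h))(z),
\end{equation*}
and $\rho_{\lambda}(h)^{-1}A\rho_{\lambda}(h)$ is again trace class.

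Next we justify the covariance property of $\Omega$ itself. The key point is that $h_{h\cdot z}$ and $hh_z$ both send $0$ to $h\cdot z$. Hence they differ by an element $h'$ of the stabilizer of $0$, and since the action is $h\cdot z=z+s(z_0)$, this stabilizer is the center $C(H_n)$. Then $\rho_{\lambda}(h')$ is a scalar, which commutes with $R$ and cancels under conjugation. Thus
\begin{equation*}
\Omega(h\cdot z)=\rho_{\lambda}(h)\rho_{\lambda}(h_z)R\rho_{\lambda}(h_z)^{-1}\rho_{\lambda}(h)^{-1}=\rho_{\lambda}(h)\Omega(z)\rho_{\lambda}(h)^{-1}.
\end{equation*}

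The main obstacle is the extension to operators that are not trace class, where $W$ is defined through the integral formula (\ref{eq:intW}). For Hilbert-Schmidt operators I would use density of trace-class operators in ${\mathcal L}_2({\mathcal F}_{\lambda})$ together with the unitarity of $W$ from Proposition \ref{prop:unitary}. Both sides of the identity are continuous in $A$: the map $A\mapsto\rho_{\lambda}(h)^{-1}A\rho_{\lambda}(h)$ is an isometry of ${\mathcal L}_2$, and composition with the translation $z\mapsto h\cdot z$ preserves $L^2({\mathbb C}^n,m_{\lambda})$ because $m_{\lambda}$ is translation invariant. The identity therefore passes to the limit almost everywhere, and everywhere when the symbols are continuous.

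For more general operators, such as differential operators with polynomial coefficients, I would instead verify the identity directly from (\ref{eq:intW}). The kernel of $\rho_{\lambda}(h)^{-1}A\rho_{\lambda}(h)$ can be expressed in terms of $k_A$ using (\ref{eq:rez}). A change of variables in the Gaussian integral should then reproduce $W(A)(h\cdot z)$, with the phase factors cancelling; this last cancellation is the routine computation I would check.
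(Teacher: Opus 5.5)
Your proof is correct and is the verification the paper has in mind (it only says the proposition is ``easily verified''): you combine $W(A)(z)=\Tr(A\Omega(z))$ with the covariance $\Omega(h\cdot z)=\rho_{\lambda}(h)\Omega(z)\rho_{\lambda}(h)^{-1}$ and cyclicity of the trace. Your extra steps go beyond what the paper writes, namely the central-stabilizer argument for the covariance of $\Omega$ and the extension past trace-class operators; the direct check on (\ref{eq:intW}) is in fact simpler than you anticipate, since after inserting (\ref{eq:rez}) the integrands for $W(\rho_{\lambda}(h)^{-1}A\rho_{\lambda}(h))(z)$ and $W(A)(h\cdot z)$ coincide identically, with no change of variables needed.
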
  

Various generalizations of the complex Weyl correspondence can be found in \cite{AU1,AU2,GB}.

\section{Complex Weyl symbols of harmonic representation operators} \label{sec:4}

Here we retain the notation of Sect. \ref{sec:2}. We write the elements $g$ of $SU(p,q)$
as block matrices $g=\left(\begin{smallmatrix} A&B\\
C&D\end{smallmatrix}\right)$ where $A,B,C,D$ are $p\times p$, $p\times q$, $q\times p$, $q\times q$
matrices satisfying the relations
\begin{align*}&AA^{\star}-BB^{\star}=I_p,\,\, CC^{\star}-DD^{\star}=-I_q,\\
&A^{\star}A-C^{\star}C=I_p,\,\,B^{\star}B-D^{\star}D=-I_q,\\
&A^{\star}B=C^{\star}D, \,\,BD^{\star}=AC^{\star}.
\end{align*}
Here the subscript $\star$ denotes the conjugate transpose.

The group $SU(p,q)$ naturally acts on ${\mathbb C}^n$ hence acts on $H_n$  by
\begin{equation*}k\cdot ((z,\bar{z}),c)=((kz,\overline{kz}),c).\end{equation*}

We fix $\lambda>0$. For simplicity, we denote $\rho:=\rho_{\lambda}$ and ${\mathcal F}=
{\mathcal F}_{\lambda}$.

For each $k\in SU(p,q)$, we define $\rho_k$ by $\rho_k(h):=\rho(k\cdot h)$. Then, since $\omega$ is $SU(p,q)$-invariant, that is, we have
\begin{equation*}\omega(k(z,\bar{z}),k(z',\bar{z'}))=\omega((z,\bar{z}),(z',\bar{z'}))\end{equation*} for each $z, z'\in {\mathbb C}^n$, we can verify that $\rho_k$
is also a generic representation of $H_n$, which has the same central character as $\rho$. 

In \cite{CaN}, we presented a proof of the following result which gives explicit formulas for the kernels of the harmonic representation operators. This theorem, together with Proposition
\ref{prop:intW}, is the main tool for the computations of the complex Weyl symbols of the
harmonic representation operators. 

\begin{theorem} \label{th:rappel} For each $k=\left(\begin{smallmatrix}A&B\\C&D\end{smallmatrix}\right)\in SU(p,q)$, let $\sigma(k)$ be the operator on $\mathcal F$ with kernel
\begin{equation*}
b_k(z,w)=(\Det A)^{-1} \exp \tfrac{\lambda}{2}\left( -({\overline D}^{-1}{\overline C}\bar{w_1})\bar{w_2}+({\overline D}^{-1}z_2)\bar{w_2}+(A^{-1}z_1)\bar{w_1}+
z_1({\overline B}({\overline D})^{-1}z_2) \right)\end{equation*}
where $z=(z_1,z_2), w=(w_1,w_2)$ with $z_1,w_1\in {\mathbb C}^p$ and 
$z_2,w_2\in {\mathbb C}^q$. Then 
\begin{enumerate} \item $\sigma$ is a unitary representation of $SU(p,q)$;
\item For each $h\in H_n$ and each $k\in SU(p,q)$, we have $\rho_k(h)\sigma(k)=
\sigma(k)\rho(h)$;
\item For each $k=\left(\begin{smallmatrix}A&B\\C&D\end{smallmatrix}\right)\in SU(p,q)$
and each $z=(z_1,z_2)\in {\mathbb C}^p \times {\mathbb C}^q$, we have
\begin{align*}
S(\sigma(k))&(z)=(\Det A)^{-1}\\
\times & \exp \tfrac{\lambda}{2}\left(-\vert z\vert^2 -({\overline D}^{-1}{\overline C}\bar{z_1})\bar{z_2}+({\overline D}^{-1}z_2)\bar{z_2}+(A^{-1}z_1)\bar{z_1}+
z_1({\overline B}({\overline D})^{-1}z_2) \right).\end{align*}
\item Let $X=\left(\begin{smallmatrix}A&B\\C&D\end{smallmatrix}\right)\in su(p,q)$.
Then, for each $z=(z_1,z_2)\in {\mathbb C}^p \times {\mathbb C}^q$, we have 
\begin{equation*}S(d\sigma(X))(z)=-\Tr(A)-\tfrac{\lambda}{2}\bigl(({\overline C}\bar{z_1})\bar{z_2}+({\overline D}z_2)\bar{z_2}+(A z_1)\bar{z_1}-z_1({\overline B}z_2)
\bigr).\end{equation*} 
\end{enumerate}\end{theorem}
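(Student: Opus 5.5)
Most of the work goes into (2), the intertwining relation; (1), (3) and (4) then follow from it. I would prove (2) on the reproducing kernels. Since finite linear combinations of the $e_w$ are dense in $\mathcal F$, it suffices to show $\rho(k\cdot h)\sigma(k)e_w=\sigma(k)\rho(h)e_w$ for all $w\in{\mathbb C}^n$ and $h\in H_n$.

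\textbf{Step 1: the intertwining relation (2).} By the kernel formula of Section \ref{sec:2}, $(\sigma(k)e_w)(z)=b_k(z,w)$. This is a Gaussian $\exp$(quadratic-plus-linear in $z$), times $(\Det A)^{-1}$, with coefficients depending antiholomorphically on $w$. By (\ref{eq:rez}), $\rho(h)e_w$ is a scalar multiple of $e_{h\cdot w}$, so the right side is an explicit scalar times $b_k(z,h\cdot w)$. The left side comes from applying the formula for $\rho(k\cdot h)$ directly to $b_k(\cdot,w)$. Both sides are exponentials of polynomials of degree at most two in $(z,\bar w,z_0,\bar z_0)$. I would compare coefficients, using the block relations of $SU(p,q)$ listed above, namely $A^\star A-C^\star C=I_p$, $A^\star B=C^\star D$, $BD^\star=AC^\star$, and so on, together with the involution $s$, which conjugates the $q$-block. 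This coefficient matching is the main obstacle. The involution $s$ mixes holomorphic and antiholomorphic variables in the $q$-block, so one has to identify carefully the $p$- and $q$-components of $s(k z_0)$ in terms of $A,B,C,D$ and $s(z_0)$.

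\textbf{Step 2: unitarity and the representation property (1).} Once (2) holds, irreducibility of $\rho$ and Schur's lemma show that $\sigma(k)^\ast\sigma(k)$ and $\sigma(k_1)\sigma(k_2)\sigma(k_1k_2)^{-1}$ commute with $\rho$, so both are scalars. To pin these scalars down, I would evaluate at the vacuum $e_0=1$:
\begin{itemize}
\item $\sigma(k)1=(\Det A)^{-1}\exp\tfrac{\lambda}{2}\,z_1(\overline B\,\overline D^{-1}z_2)$;
\item its norm is then a Gaussian integral, which should equal $1$ using $\vert\Det A\vert^2=\Det(I_p+BB^\star)$ and $\Vert\overline B\,\overline D^{-1}\Vert<1$;
\item comparing the constant term of $\sigma(k_1)\sigma(k_2)1$, obtained by a Gaussian integral over the kernel, with $(\Det A_{12})^{-1}$ should rest on the identity $A_{12}=A_1A_2+B_1C_2$, giving a cocycle equal to $1$.
\end{itemize}
Continuity in $k$ follows from the explicit kernel.

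\textbf{Step 3: the symbol formulas (3) and (4).} Part (3) is immediate from $S(\sigma(k))(z)=b_k(z,z)/\langle e_z,e_z\rangle$ with $\langle e_z,e_z\rangle=e^{\lambda\vert z\vert^2/2}$. For (4), I would differentiate (3) along $k=\exp(tX)$ at $t=0$. The matrix blocks vary as $A\approx I+tA_X$, $D\approx I+tD_X$, $B\approx tB_X$, $C\approx tC_X$, so $\tfrac{d}{dt}(\Det A)^{-1}=-\Tr A_X$ and $\tfrac{d}{dt}A^{-1}=-A_X$. Hence:
\begin{itemize}
\item the $z_1\bar z_1$ term contributes $-(A_Xz_1)\bar z_1$;
\item the $\bar z_1\bar z_2$ term contributes $-(\overline{C_X}\bar z_1)\bar z_2$;
\item the $z_2\bar z_2$ term contributes $\tfrac{d}{dt}\overline D^{-1}=-\overline{D_X}$;
\item the $z_1z_2$ term contributes $+z_1(\overline{B_X}z_2)$.
\end{itemize}
Multiplying by $\lambda/2$, these reproduce exactly the stated formula.
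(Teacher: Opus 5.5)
Your plan is sound, but it is not the paper's route. The paper does not reprove Theorem~\ref{th:rappel}. It cites \cite{CaN}, where $b_k$ is \emph{derived} by Neeb's method from the highest weight representations of $H_n\rtimes SU(p,q)$. There, unitarity and the homomorphism property come with the construction, and the kernel does not have to be guessed. The paper also notes that the result can be recovered from the metaplectic kernel formula \cite[Eq. (4.38)]{Fo} via $k\mapsto\tilde k$. Since $\Det P=\Det A\,\Det\overline D=(\Det A)^2$, the metaplectic square root becomes the single-valued $(\Det A)^{-1}$, which is why $\sigma$ is a true representation of $SU(p,q)$.

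You instead verify the given formula: intertwining by coefficient matching on coherent states, then Schur's lemma with two vacuum computations. This is self-contained and shows directly why $(\Det A)^{-1}$ is the right normalization. The cost is the bookkeeping of Step 1, which you only describe. It does go through: for $k=\bigl(\begin{smallmatrix}\cosh t&\sinh t\\ \sinh t&\cosh t\end{smallmatrix}\bigr)\in SU(1,1)$ every coefficient matches, and in the constant term the discrepancy $\vert kz_0\vert^2\neq\vert z_0\vert^2$ is absorbed by the quadratic terms of $b_k$.

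Your vacuum claims are also correct. With $M=\overline B\,\overline D^{-1}$,
$\Vert\sigma(k)1\Vert^2=\vert\Det A\vert^{-2}\Det(I_p-MM^\star)^{-1}=\vert\Det A\vert^{-2}\Det(I_p+BB^\star)=1$.
Using $\overline B\,\overline D^{-1}=(CA^{-1})^t$ and $\overline D^{-1}\overline C=(A^{-1}B)^t$, one gets
$(\sigma(k_1)\sigma(k_2)1)(0)=\bigl(\Det A_1\,\Det(I_p+A_1^{-1}B_1C_2A_2^{-1})\,\Det A_2\bigr)^{-1}=(\Det A_{12})^{-1}$,
so the cocycle is indeed trivial.

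One step needs an extra argument. When you apply Schur's lemma to $\sigma(k)^\ast\sigma(k)$, $\sigma(k)$ is only known on the span of the $e_w$, and a Gaussian kernel does not by itself give a bounded operator. You can close this as follows.
\begin{enumerate}
\item The function $w\mapsto\overline{b_k(z,w)}$ lies in $\mathcal F$ because $\Vert D^{-1}C\Vert<1$. Hence $\sigma(k)^\ast$ is densely defined and $\sigma(k)$ is closable.
\item The closure still intertwines $\rho$ and $\rho_k$.
\item The spectral projections of $\sigma(k)^\ast\sigma(k)$ then commute with the irreducible $\rho$, so $\sigma(k)^\ast\sigma(k)=c\,I$ and $\sigma(k)$ is bounded.
\item The same argument for $\sigma(k)\sigma(k)^\ast$ and $\rho_k$ gives surjectivity. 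You need this before writing $\sigma(k_1k_2)^{-1}$ in the cocycle step.
\end{enumerate}
Your Step 3 is how (3) and (4) are obtained in any approach.
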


In fact, we can also derive Theorem \ref{th:rappel} from results on the metaplectic representation. 
In particular,  we can recover (up to normalization) Theorem \ref{th:rappel}, (1) from \cite[Eq. (4.38)]{Fo} by using
the embedding
\begin{equation*}k=\begin{pmatrix}A&B\\C&D\end{pmatrix}\rightarrow
{\tilde k}=\begin{pmatrix}A&0&0&B\\
0&{\overline D}&{\overline C}&0\\
0&{\overline B}&{\overline A}&0\\
C&0&0&D\\
\end{pmatrix} \end{equation*}
from $SU(p,q)$ to a subgroup of $SU(n,n)$ which is isomorphic to $Sp(n, {\mathbb R})$.

Note that there are many ways to construct the metaplectic representation. Let us mention some of them.

In \cite{SW}, Sternberg and Wolf used representations of Lie superalgebras to construct the metaplectic representation of $Sp(n, {\mathbb R})$ and also studied its restriction
to $U(p,q)$, $p+q=n$. Formulas (4.11) and (4.12) in  \cite{SW} then correspond to  
Theorem \ref{th:rappel}, (4).

In \cite{Pe1}, in order to understand some invariance properties of theta functions, Peetre 
introduced a new object called the \textit{Fock bundle} which is a Hilbert bundle over the
Siegel's generalized upper halfplane whose fibers are Fock spaces. This also led to a construction of  the metaplectic representation, see the last formulas in \cite[Sect. 5]{Pe1}.
See also \cite{Pe2} for direct computations in the case $n=2$.

Blattner and Rawnsley \cite{BR} used geometrical quantization techniques to introduce and
study the harmonic representation.

In \cite{Da}, a formula for the kernels of the harmonic representation operators
was obtained by group-theorical methods. This formula is similar to that of 
Theorem \ref{th:rappel}, (1).

More generally, Howe \cite{How} took a higher point of view by introducing and studying
the \textit{oscillator semigroup} which is a semigroup of contractions whose closure contains
the metaplectic representation operators.

Now we aim to compute $W(\sigma(k))$ for $k\in SU(p,q)$. This computation is based on the following lemmas.

\begin{lemma} \label{lem:prep1} Let $R,S,T\in M_n({\mathbb C})$ such that $R$ and $T$
are symmetric. Assume that $\bigl(\begin{smallmatrix} R & S\\ S^t & T \end{smallmatrix}\bigr)$ is invertible and denote by $\bigl(\begin{smallmatrix} \alpha &\beta \\ \beta^t & \gamma \end{smallmatrix}\bigr)$ its inverse. Then we have 
\begin{equation*}
\begin{pmatrix} -R&2I_n-S\\
S^t-2I_n& T\end{pmatrix}
\begin{pmatrix} \alpha &\beta \\ \beta^t & \gamma
\end{pmatrix}\begin{pmatrix} -R&S-2I_n\\
2I_n-S^t& T\end{pmatrix}=
\begin{pmatrix} R+4\gamma &4I_n-S-4\beta^t\\
4I_n-S^t-4\beta& T+4\alpha\end{pmatrix}. \end{equation*}
\end{lemma}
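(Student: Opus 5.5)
Write $M=\bigl(\begin{smallmatrix} R & S\\ S^t & T \end{smallmatrix}\bigr)$ and $N=M^{-1}=\bigl(\begin{smallmatrix} \alpha &\beta \\ \beta^t & \gamma \end{smallmatrix}\bigr)$. The plan is to express the two outer matrices in terms of $M$ and fixed constant matrices, and then use only $MN=NM=I_{2n}$.

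Let $J=\bigl(\begin{smallmatrix} 0 & I_n\\ -I_n & 0 \end{smallmatrix}\bigr)$ and $E=\bigl(\begin{smallmatrix} -I_n & 0\\ 0 & I_n \end{smallmatrix}\bigr)$. Then
\begin{equation*}
EM=\begin{pmatrix} -R&-S\\ S^t& T\end{pmatrix},\qquad
ME=\begin{pmatrix} -R&S\\ -S^t& T\end{pmatrix}.
\end{equation*}
Hence the left factor is $L=EM+2J$, with blocks $-R$, $2I_n-S$, $S^t-2I_n$, $T$. The right factor is $L'=ME-2J$, with blocks $-R$, $S-2I_n$, $2I_n-S^t$, $T$.

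Expanding the product gives
\begin{equation*}
LNL'=(EM+2J)N(ME-2J)=EME-2EJ+2JE-4JNJ,
\end{equation*}
where $MN=NM=I$ has been used in the first three terms.

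Each term is a direct block computation:
\begin{itemize}
\item $EME=\bigl(\begin{smallmatrix} R & -S\\ -S^t & T \end{smallmatrix}\bigr)$.
\item $EJ=\bigl(\begin{smallmatrix} 0 & -I\\ -I & 0 \end{smallmatrix}\bigr)$ and $JE=\bigl(\begin{smallmatrix} 0 & I\\ I & 0 \end{smallmatrix}\bigr)$, so $-2EJ+2JE=\bigl(\begin{smallmatrix} 0 & 4I\\ 4I & 0 \end{smallmatrix}\bigr)$.
\item $JN=\bigl(\begin{smallmatrix} \beta^t & \gamma\\ -\alpha & -\beta \end{smallmatrix}\bigr)$, so $JNJ=\bigl(\begin{smallmatrix} -\gamma & \beta^t\\ \beta & -\alpha \end{smallmatrix}\bigr)$ and $-4JNJ=\bigl(\begin{smallmatrix} 4\gamma & -4\beta^t\\ -4\beta & 4\alpha \end{smallmatrix}\bigr)$.
\end{itemize}
Summing the three contributions gives exactly the right-hand side of the lemma, with blocks $R+4\gamma$, $4I-S-4\beta^t$, $4I-S^t-4\beta$ and $T+4\alpha$.

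Two points remain to check. First, the block form of $N$ is legitimate: $M$ is symmetric because $R$ and $T$ are, so $N$ is symmetric too. Its off-diagonal blocks are therefore $\beta$ and $\beta^t$, and $\alpha$ and $\gamma$ are symmetric. Second, the only real source of error is the signs in the decompositions of $L$ and $L'$ and in the products with $J$. I will verify each of these block by block.
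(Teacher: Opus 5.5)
Your proof is correct. The factorizations $L=EM+2J$ and $L'=ME-2J$ hold, and the expansion $EME-2EJ+2JE-4JNJ$ gives exactly the stated right-hand side.

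The paper reaches the same identity by brute force instead:
\begin{itemize}
\item It writes $NM=I_{2n}$ as four block equations ($\alpha R+\beta S^t=I_n$, $\alpha S+\beta T=0$, $\beta^t R+\gamma S^t=0$, $\beta^t S+\gamma T=I_n$) and adds their transposes, which is where the symmetry of $R$ and $T$ enters.
\item It then multiplies out in two stages. First it obtains $LN=\bigl(\begin{smallmatrix} 2\beta^t-I_n & 2\gamma\\ -2\alpha & I_n-2\beta\end{smallmatrix}\bigr)$, then $(LN)L'$, checking each block against those eight equations.
\end{itemize}
In your notation that intermediate matrix is just $E+2JN$, so both arguments have the same two-step skeleton.

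What your packaging buys:
\begin{itemize}
\item It replaces eight block identities with the two matrix relations $MN=NM=I_{2n}$.
\item It shows that the symmetry of $R$ and $T$ is used only to label the lower-left block of $M^{-1}$ as $\beta^t$. Without that symmetry, the same computation gives the identity with that block in place of $\beta^t$ in the top-right entry.
\item It fits the paper's later step in Lemma \ref{lem:prep2}(3), where $\bigl(\begin{smallmatrix}\gamma & -\beta^t\\ -\beta & \alpha\end{smallmatrix}\bigr)$ is rewritten as $-JM^{-1}J$. That is exactly the term you isolate.
\end{itemize}
The paper's version is fully explicit and needs no auxiliary matrices, at the cost of more bookkeeping.
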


\begin{proof} By writing
\begin{equation*}\begin{pmatrix} \alpha &\beta \\ \beta^t & \gamma
\end{pmatrix}\begin{pmatrix} R & S\\ S^t & T
\end{pmatrix}=I_{2n}\end{equation*}
we get the equations
\begin{align*} &\alpha R+\beta S^t=I_n; \quad \alpha S+\beta T=0;\\
&\beta^t R+\gamma S^t=0;\quad \beta^t S+\gamma T=I_n,\end{align*}
plus four additional equations obtained by transposition. By using all these equations
we obtain
\begin{equation*}\begin{pmatrix} -R&2I_n-S\\
S^t-2I_n& T\end{pmatrix}
\begin{pmatrix} \alpha &\beta \\ \beta^t & \gamma
\end{pmatrix}=\begin{pmatrix} 2\beta^t -I_n&2\gamma\\
-2\alpha& I_n-2\beta\end{pmatrix}
\end{equation*} and
\begin{equation*}\begin{pmatrix} 2\beta^t -I_n&2\gamma\\
-2\alpha& I_n-2\beta\end{pmatrix}\begin{pmatrix} -R&S-2I_n\\
2I_n-S^t& T\end{pmatrix}=
\begin{pmatrix} R+4\gamma &4I_n-S-4\beta^t\\
4I_n-S^t-4\beta& T+4\alpha\end{pmatrix}. \end{equation*}
\end{proof}

\begin{lemma} \label{lem:prep2} Let $k=\left(\begin{smallmatrix}A&B\\C&D\end{smallmatrix}\right)\in SU(p,q)$. Let $P=\left(\begin{smallmatrix}A&0\\0&{\overline D}\end{smallmatrix}\right)$,  $Q=\left(\begin{smallmatrix}0&B\\{\overline C}&0\end{smallmatrix}\right)$ and
$\tilde{k}=\left(\begin{smallmatrix}P&Q\\{\overline Q}&{\overline P}\end{smallmatrix}\right)$.
Moreover, with the notation as in the preceding lemma, take
\begin{equation*}R=\begin{pmatrix}0&-{\overline B}{\overline D}^{-1}\\
-(D^{\star})^{-1}B^{\star}& 0\end{pmatrix}; S=\begin{pmatrix}I_p+(A^t)^{-1}&0\\
0& I_q+(D^{\star})^{-1}\end{pmatrix};  T=\begin{pmatrix}0&C^{\star}(D^{\star})^{-1}\\
{\overline D}^{-1}{\overline C}& 0\end{pmatrix}\end{equation*} and let
\begin{equation*}M=\begin{pmatrix}R&S\\S^t&T\end{pmatrix}.\end{equation*} 
Then 
\begin{enumerate}\item We have 
\begin{equation*}
\begin{pmatrix}0&P\\I_n&{\overline Q}\end{pmatrix}
M=\tilde{k}+I_{2n};\end{equation*}
\item We have $\Det(M)=(-1)^n(\Det(A)\Det({\overline D}))^{-1}\Det(\tilde{k}+I_{2n})$;
\item Assume that $\Det(k+I_n)\not=0$ and denote $J=\left(\begin{smallmatrix}0&I_n\\-I_n&0\end{smallmatrix}\right)$. We have
\begin{equation*}\begin{pmatrix} \gamma &\tfrac{1}{2}-\beta^t \\ \tfrac{1}{2}-\beta & \alpha\end{pmatrix}=\tfrac{1}{2}J(\tilde{k}+I_{2n})^{-1}(\tilde{k}-I_{2n}).\end{equation*}
\end{enumerate}
\end{lemma}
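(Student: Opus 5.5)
The plan is direct block-matrix computation, using the $SU(p,q)$ relations recalled at the start of this section. Throughout, $\alpha,\beta,\gamma$ denote the blocks of $M^{-1}$, as in Lemma \ref{lem:prep1}.

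\textbf{Part (1).} Write $\left(\begin{smallmatrix}0&P\\I_n&{\overline Q}\end{smallmatrix}\right)M$ blockwise. It equals
\begin{equation*}
\begin{pmatrix} PS^t & PT\\ R+{\overline Q}S^t & S+{\overline Q}T\end{pmatrix},
\end{equation*}
and we must show this is $\left(\begin{smallmatrix}P+I&Q\\{\overline Q}&{\overline P}+I\end{smallmatrix}\right)$.

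Since $S$ is block diagonal, $S^t=\Diag(I_p+A^{-1},\,I_q+{\overline D}^{-1})$. Hence $PS^t=\Diag(A+I_p,\,{\overline D}+I_q)=P+I_n$.

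Next, $PT$ has off-diagonal blocks $AC^{\star}(D^{\star})^{-1}$ and ${\overline C}$. The relation $AC^{\star}=BD^{\star}$ turns the first into $B$, so $PT=Q$.

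For the lower-left block, ${\overline Q}=\left(\begin{smallmatrix}0&{\overline B}\\C&0\end{smallmatrix}\right)$, so ${\overline Q}S^t$ has off-diagonal blocks ${\overline B}(I_q+{\overline D}^{-1})$ and $C(I_p+A^{-1})$. Adding $R$, the upper block becomes ${\overline B}$. The lower block becomes $C+CA^{-1}-(D^{\star})^{-1}B^{\star}$. This equals $C$ exactly when $D^{\star}C=B^{\star}A$, which is the conjugate transpose of $A^{\star}B=C^{\star}D$.

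For the lower-right block, ${\overline Q}T=\Diag({\overline B}\,{\overline D}^{-1}{\overline C},\,CC^{\star}(D^{\star})^{-1})$. We need $I+(A^t)^{-1}+{\overline B}\,{\overline D}^{-1}{\overline C}=I+{\overline A}$, which after conjugation reads $(A^{\star})^{-1}+BD^{-1}C=A$. We also need $I+(D^{\star})^{-1}+CC^{\star}(D^{\star})^{-1}=I+D$, which after multiplying by $D^{\star}$ reads $I+CC^{\star}=DD^{\star}$; that is a listed relation.

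For the first of these, use $BD^{-1}=(A^{\star})^{-1}C^{\star}$, which follows from $A^{\star}B=C^{\star}D$. This reduces it to $(A^{\star})^{-1}(I+C^{\star}C)=A$, i.e.\ $A^{\star}A-C^{\star}C=I$. I expect this bookkeeping of which relation, and in which conjugated or transposed form, to be the main, albeit routine, obstacle.

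\textbf{Part (2).} Take determinants in (1). Using block elimination on the zero upper-left block, $\Det\left(\begin{smallmatrix}0&P\\I_n&{\overline Q}\end{smallmatrix}\right)=(-1)^{n}\Det P$. Indeed, swapping the two block columns costs $(-1)^{n\cdot n}=(-1)^n$ and leaves a block lower-triangular matrix.

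Since $\Det P=\Det A\,\Det{\overline D}$, solving for $\Det M$ gives the stated formula.

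\textbf{Part (3).} By (2), $\Det M\neq 0$ whenever $\tilde k+I_{2n}$ is invertible. I will assume this follows from $\Det(k+I_n)\neq0$, since $\tilde k$ is conjugate to $\Diag(k,\bar k)$ up to reordering of coordinates. So $M^{-1}=\left(\begin{smallmatrix}\alpha&\beta\\ \beta^t&\gamma\end{smallmatrix}\right)$ exists.

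From (1), $M^{-1}=(\tilde k+I)^{-1}\left(\begin{smallmatrix}0&P\\I&{\overline Q}\end{smallmatrix}\right)$. Rather than invert this directly, I will use a cleaner route. Observe that
\begin{equation*}
J M^{-1}=\begin{pmatrix}\beta^t&\gamma\\-\alpha&-\beta\end{pmatrix},
\end{equation*}
so the claim is equivalent to
\begin{equation*}
JM^{-1}+J\begin{pmatrix}0&-\tfrac12\\ \tfrac12&0\end{pmatrix}\ \text{matching}\ \tfrac12(\tilde k+I)^{-1}(\tilde k-I),
\end{equation*}
up to the sign bookkeeping of $J^2=-I$.

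Equivalently, since $\tfrac12(\tilde k+I)^{-1}(\tilde k-I)=\tfrac12 I-(\tilde k+I)^{-1}$, it suffices to check
\begin{equation*}
-J\begin{pmatrix}\gamma&-\beta^t\\-\beta&\alpha\end{pmatrix}=-(\tilde k+I)^{-1}.
\end{equation*}
Multiplying on the left by $\tilde k+I=\left(\begin{smallmatrix}0&P\\I&{\overline Q}\end{smallmatrix}\right)M$, this reduces to an identity between $\left(\begin{smallmatrix}0&P\\I&{\overline Q}\end{smallmatrix}\right)$, $M$, $J$ and $M^{-1}$. I will verify that identity using $MM^{-1}=I$ and the block equations for $\alpha,\beta,\gamma$ written out in the proof of Lemma \ref{lem:prep1}.
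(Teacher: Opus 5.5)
\textbf{Parts (1) and (2).} These are correct and follow the paper's direct block computation. Your lower-left block $R+\overline{Q}S^t$ is the right one. Your permutation argument giving $\Det(\tilde k+I_{2n})=\vert\Det(k+I_n)\vert^2$ is a valid justification of invertibility, so you need not phrase it as an assumption.

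\textbf{Part (3).} This is where the gap is: the identity you reduce to is false. Write $J_1=\left(\begin{smallmatrix}0&I_n\\ I_n&0\end{smallmatrix}\right)$. The left-hand side of the claim is $\left(\begin{smallmatrix}\gamma&-\beta^t\\ -\beta&\alpha\end{smallmatrix}\right)+\tfrac12 J_1$, and the right-hand side is $\tfrac12J-J(\tilde k+I_{2n})^{-1}$. Multiplying on the left by $-J=J^{-1}$ and using $JJ_1=\Diag(I_n,-I_n)$ gives
\begin{equation*}
-J\begin{pmatrix}\gamma&-\beta^t\\ -\beta&\alpha\end{pmatrix}=\Diag(I_n,0)-(\tilde k+I_{2n})^{-1},
\end{equation*}
not $-(\tilde k+I_{2n})^{-1}$. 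You dropped the constant coming from the entries $\tfrac12$.

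The failure is visible already at $k=I_n$. There $M=\left(\begin{smallmatrix}0&2I_n\\ 2I_n&0\end{smallmatrix}\right)$, so $\alpha=\gamma=0$ and $\beta=\tfrac12 I_n$. Your left side is then $\Diag(\tfrac12 I_n,-\tfrac12 I_n)$, while your right side is $-\tfrac12 I_{2n}$.

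The matrix on the left also equals $-M^{-1}J$; it is not an expression in $JM^{-1}$, so you had the wrong side. With $X=\left(\begin{smallmatrix}0&P\\ I_n&\overline Q\end{smallmatrix}\right)$, your version amounts to $XJ=I_{2n}$. This would force $k=-I_n$, which the hypothesis excludes, so it cannot be verified.

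With the constant restored, the argument goes through. Substitute $M^{-1}=(\tilde k+I_{2n})^{-1}X$ from (1) and multiply by $\tilde k+I_{2n}$. The claim reduces to
\begin{equation*}
-XJ=(\tilde k+I_{2n})\Diag(I_n,0)-I_{2n},
\end{equation*}
and both sides equal $\left(\begin{smallmatrix}P&0\\ \overline Q&-I_n\end{smallmatrix}\right)$. This is exactly the paper's argument, written there via $\tfrac12J_1-JM^{-1}J$. It needs only (1), not the block equations of Lemma \ref{lem:prep1}.
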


\begin{proof} (1) We can verify that
\begin{equation*}\begin{pmatrix}0&P\\I_n&{\overline Q}\end{pmatrix}
\begin{pmatrix} R & S\\ S^t & T \end{pmatrix}=
\begin{pmatrix} PS^t & PT\\ R+{\overline Q}T &S+{\overline Q} T \end{pmatrix}=
\begin{pmatrix}P+I_n&Q\\{\overline Q}&{\overline P}+I_n\end{pmatrix}=\tilde{k}+I_{2n}
\end{equation*}
by using the relations given at the beginning of this section. For instance, we have
\begin{equation*}S+{\overline Q} T=\begin{pmatrix} I_n+(A^t)^{-1}+{\overline B}{\overline D}^{-1}{\overline C} & 0\\ 0& I_n+(D^{\star})^{-1}+CC^{\star}(D^{\star})^{-1} \end{pmatrix}=\begin{pmatrix} I_p+{\overline A} & 0\\ 0 & I_q+D \end{pmatrix}
\end{equation*}
since
\begin{align*}(A^t)^{-1}+&{\overline B}{\overline D}^{-1}{\overline C}=
(A^t)^{-1}(I_p+A^t{\overline B}{\overline D}^{-1}{\overline C})=
(A^t)^{-1}(I_p+(C^t{\overline D})({\overline D}^{-1}{\overline C}))\\=&
(A^t)^{-1}(I_p+C^t{\overline C})=(A^t)^{-1} (I_p+\overline{C^{\star}C)})=
(A^t)^{-1}\overline {A^{\star}A}={\overline A}.
\end{align*}

(2) The result is obtained by taking  determinants of both sides of the matrix equation
proved in (1).

(3) First note that $\Det(\tilde{k}+I_{2n})=\vert \Det(k+I_{n})\vert^2$. Let $J_1=
\bigl(\begin{smallmatrix} 0& I_n\\ I_n & 0 \end{smallmatrix}\bigr)$. Taking (1) into account,
we have

\begin{align*}\begin{pmatrix}\gamma &\tfrac{1}{2}I_n-\beta^t\\
\tfrac{1}{2}I_n-\beta&\alpha \end{pmatrix}&=\tfrac{1}{2}J_1+
\begin{pmatrix} \gamma &-\beta^t \\-\beta & \alpha  \end{pmatrix}\\
&=\tfrac{1}{2}J_1-J\begin{pmatrix} \alpha &\beta \\
\beta^t & \gamma  \end{pmatrix}J\\
&=\tfrac{1}{2}J_1-J(\tilde {k}+I_{2n})^{-1}\begin{pmatrix} 0&P\\
I_n&{\overline Q} \end{pmatrix}J\\
&=J(\tilde {k}+I_{2n})^{-1}\left( \tfrac{1}{2}(\tilde {k}+I_{2n})J^{-1}J_1-\begin{pmatrix} 0&P\\
I_n&{\overline Q} \end{pmatrix}J\right)\\
&=\tfrac{1}{2}J(\tilde {k}+I_{2n})^{-1}(\tilde {k}-I_{2n}).
\end{align*}
\end{proof}

Let \begin{equation*}U=\begin{pmatrix} I_p&iI_p&0&0\\
0&0&I_q&iI_q\\
I_p&-iI_p&0&0\\
0&0&I_q&-iI_q\\ \end{pmatrix}.\end{equation*}
Then we have $UU^{\star}=2I_{2n}$ and $U U^t=2J_1$.

\begin{lemma} \label{lem:prep3} Assume that $\Det(k+I_{n})\not=0$. Retain the notation of Lemma \ref{lem:prep2}. Then $\Rea(U^tMU)$ is a positive definite matrix.
\end{lemma}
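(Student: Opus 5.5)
The plan is to read $U^tMU$ as the matrix of the quadratic form in the Gaussian integral \eqref{eq:intW} for $A=\sigma(k)$, written in real coordinates. We then get positivity from two facts: $\sigma(k)$ is unitary, which gives semi-definiteness, and $\Det(k+I_n)\neq 0$, which gives non-degeneracy.

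\textbf{Step 1: identify the quadratic form.} For a real vector $v=(x_1,y_1,x_2,y_2)\in{\mathbb R}^{2n}$, the vector $Uv$ equals $(x_1+iy_1,\,x_2+iy_2,\,x_1-iy_1,\,x_2-iy_2)$. It is therefore of the form $(\zeta,\zeta')$ with $\zeta'$ obtained from $\bar\zeta$; after the appropriate relabelling of the blocks of $w$, this is the vector $(w_1,\bar w_2,\bar w_1,w_2)$ that appears in $b_k$. We expand $b_k(z+w,z-w)\exp\bigl(\tfrac{\lambda}{2}(-z\bar z-w\bar w+z\bar w-\bar z w)\bigr)$ using the formula in Theorem \ref{th:rappel}. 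We then check that its part quadratic in $w$ is $-\tfrac{\lambda}{4}(Uv)^tM(Uv)$, up to the normalisation of the blocks $R,S,T$ fixed in Lemma \ref{lem:prep2}. The $2I_n$ shifts in Lemma \ref{lem:prep1} come from the $-w\bar w$ term, which explains the $I_p+(A^t)^{-1}$ and $I_q+(D^\star)^{-1}$ blocks of $S$. Hence $v^t\,\Rea(U^tMU)\,v=\Rea\bigl((Uv)^tM(Uv)\bigr)$, and this must be shown to be $>0$ for $v\neq 0$.

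\textbf{Step 2: semi-definiteness from unitarity.} Since $\sigma(k)$ is unitary (Theorem \ref{th:rappel}, (1)), its kernel satisfies $|k_{\sigma(k)}(z,w)|=|\langle\sigma(k)e_w,e_z\rangle|\le \Vert e_w\Vert\,\Vert e_z\Vert=\exp\bigl(\tfrac{\lambda}{4}(|z|^2+|w|^2)\bigr)$. Put $z=0$, replace $w$ by $tw$ with $t\to\infty$, and compare the Gaussian growth rates. This yields $\Rea\bigl((Uv)^tM(Uv)\bigr)\ge 0$ for all real $v$, so $\Rea(U^tMU)$ is positive semi-definite.

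\textbf{Step 3: non-degeneracy.} This is the main obstacle. Suppose $v\neq0$ satisfies $v^t\Rea(U^tMU)v=0$. A real symmetric positive semi-definite matrix vanishes on its isotropic vectors, so $\Rea(U^tMU)v=0$. We will convert this into a statement about $\tilde k$ using Lemma \ref{lem:prep2}, (1), which says $M=N^{-1}(\tilde k+I_{2n})$ with $N=\bigl(\begin{smallmatrix}0&P\\ I_n&\overline Q\end{smallmatrix}\bigr)$. We will also use the fact that $\tilde k$ preserves the indefinite Hermitian form $\mathrm{diag}(J_0,-J_0)$ coming from the $SU(p,q)$ relations.

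Concretely, we first compute $\Rea(U^tMU)=\tfrac12\bigl(U^tMU+U^\star\overline MU\bigr)$, using $UU^\star=2I_{2n}$, $UU^t=2J_1$ and $\overline{Uv}=J_1Uv$. The aim is to express this, up to congruence, as $(\tilde k+I)^{\star}{}^{-1}\bigl(\text{form}\bigr)(\tilde k+I)^{-1}$, where the form is $\mathcal H-\tilde k^\star\mathcal H\tilde k$ for a suitable $\mathcal H$; this is a Cayley-transform identity. The condition $\Det(k+I_n)\neq0$ is equivalent to $\Det(\tilde k+I_{2n})=|\Det(k+I_n)|^2\neq0$, which makes the Cayley transform well defined. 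An isotropic $v$ would then give a vector $u=(\tilde k+I)^{-1}Uv$ with $\tilde k u$ and $u$ related so as to force $u=0$. We would exploit that the definite part $|w|^2$ coming from the Fock weight dominates, since $\sigma(k)$ cannot map a coherent-state direction isometrically in a way that saturates Cauchy--Schwarz.

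If this route becomes messy, the fallback is a direct verification via Lemma \ref{lem:prep2}, (3). There, $\bigl(\begin{smallmatrix}\gamma&\frac12-\beta^t\\ \frac12-\beta&\alpha\end{smallmatrix}\bigr)=\tfrac12J(\tilde k+I)^{-1}(\tilde k-I)$. We would show that the real part of the corresponding form is $J$-congruent to $(\tilde k+I)^{-\star}\bigl(I-\tilde k^\star\tilde k\bigr)(\tilde k+I)^{-1}$-type expressions that are definite in the $U$-coordinates, and then transfer the positivity to $M$ by inversion. Inversion preserves positive definiteness of the real part for symmetric complex matrices.
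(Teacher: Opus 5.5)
Steps 1 and 2 are correct, and Step 2 is a genuinely different route to semi-definiteness. At $z=0$ the integrand of \eqref{eq:intW} is $b_k(w,-w)e^{-\lambda|w|^2/2}$, whose modulus is $|\Det A|^{-1}\exp\bigl(-\tfrac{\lambda}{4}\Rea((Uv)^tM(Uv))\bigr)$. Unitarity of $\sigma(k)$ gives $|b_k(w,-w)|\le\Vert e_w\Vert\,\Vert e_{-w}\Vert=e^{\lambda|w|^2/2}$, and scaling $w$ yields $\Rea(U^tMU)\ge 0$.

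\textbf{The gap is Step 3.} You rightly call non-degeneracy the crux, but you never state or prove the identity it rests on. Semi-definiteness plus $\Det(U^tMU)\neq0$ is not enough: $\bigl(\begin{smallmatrix}0&i\\ i&1\end{smallmatrix}\bigr)$ is symmetric and invertible, yet its real part is $\Diag(0,1)$. So you need extra structure, and the structure you propose does not supply it.
\begin{itemize}
\item $\tilde k$ preserves $\mathcal H:=JJ_1=\Diag(I_n,-I_n)$, not $\Diag(J_0,-J_0)$.
\item For this invariant $\mathcal H$, the form $\mathcal H-\tilde k^\star\mathcal H\tilde k$ is identically zero.
\item $I_{2n}-\tilde k^\star\tilde k$ is indefinite as soon as $B\neq0$, because the eigenvalues of $\tilde k^\star\tilde k$ come in reciprocal pairs. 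It stays indefinite on the real subspace $\{Uv\}$, which $\tilde k$ preserves. So the definite expression $(\tilde k+I)^{-\star}(I-\tilde k^\star\tilde k)(\tilde k+I)^{-1}$ in your fallback does not exist.
\item The real part of the form in Lemma \ref{lem:prep2}(3) actually vanishes identically on vectors $(z,\bar z)$; this is why $|W(\sigma(k))|$ is constant.
\item The Cauchy--Schwarz heuristic fails too. An isotropic $v$ would give $|\langle\sigma(k)e_{-tw},e_{tw}\rangle|=|\Det A|^{-1}\Vert e_{tw}\Vert\,\Vert e_{-tw}\Vert$, which is strictly below the Cauchy--Schwarz bound whenever $C\neq0$. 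Nothing is saturated, so no contradiction follows.
\end{itemize}

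\textbf{The missing idea}, which is the heart of the paper's proof, is that $M'=M-J_1$ is unitary; this follows from the $SU(p,q)$ relations. Since $u=Uv$ satisfies $\bar u=J_1u$, you get $u^tMu=u^\star J_1Mu=|u|^2+u^\star(J_1M')u$. Hence $\Rea(u^tMu)\ge|u|^2-|u|\,|J_1M'u|=0$. Equality forces $J_1M'u=-u$, i.e.\ $Mu=0$. Lemma \ref{lem:prep2}(2) excludes this, since $\Det(\tilde k+I_{2n})=|\Det(k+I_n)|^2\neq0$. The paper says the same thing by diagonalizing the unitary $J_1M'$, so that $\Rea(U^tMU)$ is congruent to $\Diag(1+\cos t_j)$ with $1+e^{it_j}\neq0$.

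Your fallback can also be repaired, but with the opposite emphasis. Lemma \ref{lem:prep2}(3) gives $M^{-1}=\tfrac12(J_1+\mathcal CJ)$ with $\mathcal C=(\tilde k+I_{2n})^{-1}(\tilde k-I_{2n})$. Since $\tilde k^\star\mathcal H\tilde k=\mathcal H$, the matrix $\mathcal C\mathcal H$ is skew-Hermitian, so $\Rea\bigl((U^tMU)^{-1}\bigr)=\tfrac14I_{2n}$. Positivity then passes to $U^tMU$ by inversion. The Cayley part contributes nothing to the real part; definiteness comes entirely from the constant $J_1$ term.
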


\begin{proof} By using the relations introduced at the beginning of this section, we can verify that the matrix
\begin{equation*}M':=M-J_1=\begin{pmatrix} R & S-I_n\\ S^t-I_n & T \end{pmatrix}
=\begin{pmatrix}0&-{\overline B}{\overline D}^{-1}&(A^t)^{-1}&0\\
-(D^{\star})^{-1}B^{\star}&0&0&(D^{\star})^{-1}\\
A^{-1}&0&0&C^{\star}(D^{\star})^{-1}\\
0&{\overline D}^{-1}&{\overline D}^{-1}{\overline C}&0\\
\end{pmatrix}\end{equation*}
is unitary hence $J_1M'$ is unitary. Then we can write
\begin{equation*}J_1M'=U_0
\Diag(e^{it_1},e^{it_2},\ldots,e^{it_{2n}})U_0^{\star}\end{equation*}
where $U_0$ is a unitary matrix  and
$t_1,t_2,\ldots,t_{2n}$ are real numbers.

Let $N=U^tMU$. Then
\begin{align*}UNU^{\star}&=4J_1M=4J_1(M'+J_1)=4(J_1M'+I_{2n})\\
&=4U_0\Diag(1+e^{it_1},1+e^{it_2},\ldots,1+e^{it_{2n}})U_0^{\star}.
\end{align*}
Thus we can write
\begin{equation*}N=U_1
\Diag(1+e^{it_1},1+e^{it_2},\ldots,1+e^{it_{2n}})U_1^{\star}\end{equation*}
where $U_1$ is an invertible matrix. Now, since $\Det(N)=(\Det(U))^2\Det(M)\not= 0$ by Lemma \ref{lem:prep2}, we have $1+e^{it_j}\not= 0$ hence $1+\cos(t_j)>0$ for each $j=1,2,\ldots,2n$. But
\begin{equation*}\Rea(N)=\tfrac{1}{2}(N+{\overline N})=\tfrac{1}{2}(N+N^{\star})=U_1\Diag(1+\cos(t_1),1+\cos(t_2),\ldots,1+\cos(t_{2n}))U_1^{\star}.
\end{equation*} Consequently, $\Rea(N)$ is a positive definite matrix.
\end{proof}

\begin{theorem} \label{th:Wsigma} Let $k=\left(\begin{smallmatrix}A&B\\C&D\end{smallmatrix}\right)\in SU(p,q)$. Assume that  $\Det(k+I_{n})\not=0$.
Then, with the notation of the preceding lemmas, for each $z\in {\mathbb C}^n$, we have
\begin{align*}W(\sigma(k))&(z)=2^n\vert \Det(k+I_{n})\vert^{-1}
\exp \left(\lambda \begin{pmatrix}z&
{\bar z}\end{pmatrix}\begin{pmatrix}\gamma &\tfrac{1}{2}I_n-\beta^t\\
\tfrac{1}{2}I_n-\beta&\alpha \end{pmatrix}\begin{pmatrix}z\\{\bar z}\end{pmatrix}\right)\\
=&
2^n\vert \Det(k+I_{n})\vert^{-1}
\exp \left( \tfrac{\lambda}{2}\begin{pmatrix}z&
{\bar z}\end{pmatrix}J
({\tilde k}+I_{2n})^{-1}({\tilde k}-I_{2n})\begin{pmatrix}z\\{\bar z}\end{pmatrix}\right).
\end{align*}
\end{theorem}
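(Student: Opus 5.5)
We plan to insert the kernel $b_k$ of Theorem \ref{th:rappel} into the integral formula \eqref{eq:intW} of Proposition \ref{prop:intW}. We use $\sigma(k)$ itself rather than a trace-class operator, interpreting the integral as the definition of $W$ for this operator, as allowed after Proposition \ref{prop:intW}.

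With $k_{\sigma(k)}(z+w,z-w)=b_k(z+w,z-w)$, the integrand becomes $(\Det A)^{-1}$ times the exponential of $\tfrac{\lambda}{2}$ times a quadratic form in $(z,\bar z,w,\bar w)$. The first step is to organize this quadratic form. Write $u=z+w$ and $v=\overline{z-w}=\bar z-\bar w$, and use the variables $X=\begin{pmatrix}w\\ \bar w\end{pmatrix}$. The exponent should then take the form
\begin{equation*}
\tfrac{\lambda}{2}\Bigl(-\tfrac12 X^t\,\widehat M\,X+ X^t L \begin{pmatrix}z\\ \bar z\end{pmatrix}+\tfrac12\begin{pmatrix}z&\bar z\end{pmatrix}\widehat M_0\begin{pmatrix}z\\ \bar z\end{pmatrix}\Bigr).
\end{equation*}
The bookkeeping of the blocks $z_1,z_2$ is exactly what the matrices $R,S,T$ of Lemma \ref{lem:prep2} encode. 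The $(w,\bar w)$-part of the kernel exponent combined with $-w\bar w$ should reproduce $M=\begin{pmatrix}R&S\\S^t&T\end{pmatrix}$ (the $I_n$ in $S$ comes from $-w\bar w$). The mixed terms should involve $\begin{pmatrix}-R&S-2I_n\\2I_n-S^t&T\end{pmatrix}$, where the $2I_n$ comes from $z\bar w-\bar z w$ together with $-z\bar z$. The pure $z$-part should be $\begin{pmatrix}R&4I_n-S\\ \cdots&T\end{pmatrix}$-like.

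Next we evaluate the Gaussian integral. Passing to real coordinates $w=x+iy$ via the matrix $U$, the integral becomes $\int_{\mathbb{R}^{2n}}\exp(-\tfrac{\lambda}{4}\xi^tN\xi+\ell(z)^t\xi)\,d\xi$ with $N=U^tMU$. Lemma \ref{lem:prep3} gives $\Rea N>0$, so the standard complex Gaussian formula applies. It yields the factor $(\tfrac{4\pi}{\lambda})^n\Det(N)^{-1/2}$, times the exponential of the completed square $\tfrac{\lambda}{4}\,\ell^tN^{-1}\ell$, with the branch of the square root fixed by continuity from $\Rea N>0$.

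Translated back, the completed square equals the quadratic form in $(z,\bar z)$ given by the left-hand side of Lemma \ref{lem:prep1}, namely $\begin{pmatrix}-R&2I_n-S\\S^t-2I_n&T\end{pmatrix}M^{-1}\begin{pmatrix}-R&S-2I_n\\2I_n-S^t&T\end{pmatrix}$. Adding the pure $z$-part, Lemma \ref{lem:prep1} should collapse the total to $\begin{pmatrix}4\gamma&2I_n-4\beta^t\\2I_n-4\beta&4\alpha\end{pmatrix}$ up to the factor. The result is the exponent $\lambda\,(z,\bar z)\begin{pmatrix}\gamma&\frac12 I_n-\beta^t\\ \frac12 I_n-\beta&\alpha\end{pmatrix}(z,\bar z)^t$. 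Lemma \ref{lem:prep2}(3) then gives the second expression.

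For the prefactor, we combine $2^n$, $(\lambda/2\pi)^n$ from $dm_\lambda$, $(\Det A)^{-1}$, and $\Det(N)^{-1/2}=(\Det U)^{-1}\Det(M)^{-1/2}$. Using Lemma \ref{lem:prep2}(2) and $\Det(\tilde k+I)=|\Det(k+I)|^2$, this gives $\Det(M)=\pm(\Det A\,\overline{\Det D})^{-1}|\Det(k+I)|^2$. The factors $\Det A$ should then cancel against $\Det(M)^{-1/2}$. Here $\Det k=1$ gives $\Det A=\overline{\Det D}$ up to the relation between $\Det A$ and $\Det D$ for $SU(p,q)$, so that $\Det A\,\overline{\Det D}=(\Det A)^2$ and $\Det(M)^{-1/2}=\pm\Det A\,|\Det(k+I)|^{-1}$. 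The result is $2^n|\Det(k+I_n)|^{-1}$.

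We expect the main obstacle to be this determinant and branch bookkeeping: fixing the sign of the square root, and checking $\Det A=\overline{\Det D}$ for $k\in SU(p,q)$. For the branch we would argue by connectedness of $\{k:\Det(k+I)\neq0\}$ components containing $k=I$, where everything is explicit ($W(I)=1$ since $B^{1/2}$ fixes constants). Alternatively, the sign follows from positivity of $\Rea N$, which forces $\Det(N)^{-1/2}$ to be the principal branch.
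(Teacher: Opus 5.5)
Your route is the paper's route. You insert $b_k$ into \eqref{eq:intW}, pass to real coordinates with $U$, get convergence from Lemma~\ref{lem:prep3}, complete the square with Lemma~\ref{lem:prep1}, and take the determinant from Lemma~\ref{lem:prep2}. One bookkeeping correction: the pure $(z,\bar z)$-part of the exponent is $\tfrac{\lambda}{4}(z,\bar z)\bigl(\begin{smallmatrix}-R&S-2I_n\\ S^t-2I_n&-T\end{smallmatrix}\bigr)(z,\bar z)^t$. It is this matrix that, added to the right-hand side of Lemma~\ref{lem:prep1}, leaves $\bigl(\begin{smallmatrix}4\gamma&2I_n-4\beta^t\\ 2I_n-4\beta&4\alpha\end{smallmatrix}\bigr)$. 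Your guessed form would not cancel $R$ and $T$.

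The genuine gap is the one you flagged, the sign of the constant. It cannot be closed, because the formula with $\vert\Det(k+I_n)\vert^{-1}$ is false wherever $\Det(k+I_n)<0$.

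First, $\Det(k+I_n)$ is real on $SU(p,q)$. Indeed $k^{-1}=J_0k^{\star}J_0$ and $\Det k=1$ give $\Det(k+I_n)=\Det(I_n+k^{-1})=\Det(I_n+k^{\star})=\overline{\Det(k+I_n)}$. Since it takes both signs, $\{\Det(k+I_n)\neq0\}$ is disconnected. Your continuity argument from $k=I_n$ therefore only reaches the component of $I_n$.

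Your second argument fails for a different reason. Positivity of $\Rea N$ does pin down $\Det(N)^{-1/2}$, as the branch continuous from real positive definite $N$; in general this is not the principal root of the number $\Det N$. But the constant is $4^n(\Det A)^{-1}\Det(N)^{-1/2}$, and the phase of $(\Det A)^{-1}$ is not controlled by this.

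Concretely, take $k=\Diag(e^{i\theta_1},\dots,e^{i\theta_n})\in SU(p,q)$. At $z=0$ the integral splits into one-variable Gaussians $\int_{\mathbb C}e^{-a\vert w\vert^2}dm(w)=\pi/a$, with no square roots involved, and
\begin{equation*}
W(\sigma(k))(0)=2^n(\Det A)^{-1}\prod_{j\le p}(1+e^{-i\theta_j})^{-1}\prod_{j>p}(1+e^{i\theta_j})^{-1}=2^n\Det(k+I_n)^{-1}.
\end{equation*}
For $k=\Diag(e^{3i\pi/4},e^{3i\pi/4},i)\in SU(2,1)$ one has $\Det(k+I_3)=2-2\sqrt2<0$. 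So $W(\sigma(k))(0)<0$, whereas the statement predicts $2^3\vert\Det(k+I_3)\vert^{-1}>0$. The same happens already for $p=q=1$: $k=-\bigl(\begin{smallmatrix}\cosh t&-\sinh t\\ -\sinh t&\cosh t\end{smallmatrix}\bigr)$ with $t\neq0$ gives, by two successive one-variable Gaussian integrations, $W(\sigma(k))(0)=-2/(\cosh t-1)=4\Det(k+I_2)^{-1}$.

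The paper's proof makes the same tacit choice when it extracts the square root of $\Det(U^tMU)=2^{2n}(\Det A)^{-2}\vert\Det(k+I_n)\vert^2$, so you are following it faithfully. What the computation actually proves is the stated formula up to a locally constant sign, which equals $+1$ on the component of $I_n$. The examples indicate that the correct constant in general is $2^n\Det(k+I_n)^{-1}$, not $2^n\vert\Det(k+I_n)\vert^{-1}$.
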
 

\begin{proof} Let $k=\left(\begin{smallmatrix}A&B\\C&D\end{smallmatrix}\right)\in SU(p,q)$ such that  $\Det(k+I_{n})\not=0$. We have
\begin{equation*}W(\sigma(k))(z)=\left(\tfrac{\lambda}{\pi}\right)^n\int_{{\mathbb C}^n}b_k(z+w,z-w)\exp \left(\tfrac{\lambda}{2}\left(-z{\bar z}-w{\bar w}+z{\bar w}-{\bar z}w\right)\right) dm(w)\end{equation*}
where the kernel $b_k$ of $\sigma(k)$ is given by Theorem \ref{th:rappel}. Then we obtain,
for each $z=(z_1,z_2)\in {\mathbb C}^p\times {\mathbb C}^q$,
\begin{align*}W&(\sigma(k))(z)=\left(\tfrac{\lambda}{\pi}\right)^n
(\Det A)^{-1}\\
& \times \exp \left(\tfrac{\lambda}{2}\left(
-({\overline D}^{-1}{\overline C}{\bar z_1}){\bar z_2}+({\overline D}^{-1}z_2){\bar z_2}+(A^{-1}z_1){\bar z_1}+z_1({\overline B}{\overline D}^{-1}{\bar z_2})-z{\bar z}\right)\right)
\, I(k)\end{align*}
where 
\begin{align*}I&(k):=\int_{{\mathbb C}^n}
\exp \left(\tfrac{\lambda}{2}\left(
-({\overline D}^{-1}{\overline C}{\bar w_1}){\bar w_2}-({\overline D}^{-1}w_2){\bar w_2}-(A^{-1}w_1){\bar w_1}+w_1({\overline B}{\overline D}^{-1}{ w_2})-w{\bar w}\right)\right)\\
\times & \exp \Bigl(\tfrac{\lambda}{2}\bigl(
({\overline D}^{-1}{\overline C}{\bar z_1}){\bar w_2}+
({\overline D}^{-1}{\overline C}{\bar w_1}){\bar z_2}
-({\overline D}^{-1}z_2){\bar w_2}+({\overline D}^{-1}w_2){\bar z_2}\\
-&(A^{-1}z_1){\bar w_1}+(A^{-1}w_1){\bar z_1}+z_1({\overline B}{\overline D}^{-1}{ w_2})+w_1({\overline B}{\overline D}^{-1}{z_2})+z{\bar w}-{\bar z}w \bigr)\Bigr)\,dm(w).
\end{align*}
Let
\begin{equation*}M=\begin{pmatrix}0&-{\overline B}{\overline D}^{-1}&I_p+(A^t)^{-1}&0\\
-(D^{\star})^{-1}B^{\star}&0&0&I_q+(D^{\star})^{-1}\\
I_p+A^{-1}&0&0&C^{\star}(D^{\star})^{-1}\\
0&I_q+{\overline D}^{-1}&{\overline D}^{-1}{\overline C}&0\\
\end{pmatrix}\end{equation*} 
and
\begin{align*}V&=\bigl( (A^t)^{-1}-I_p){\bar z_1}+{\overline B}{\overline D}^{-1}z_2,
((D^{\star})^{-1}-I_q){\bar z_2}+(D^{\star})^{-1}B^{\star}z_1,\\
& (I_p-A^{-1})z_1+C^{\star}(D^{\star})^{-1}{\bar z_2}, (I_q-{\overline D}^{-1})z_2+
{\overline D}^{-1}{\overline C}{\bar z_1}\bigr).
\end{align*}
Then we have
\begin{align*}I(k)&=\int_{{\mathbb C}^n}\exp \left(-\tfrac{\lambda}{4}
\begin{pmatrix}w_1&w_2&{\bar w_1}&{\bar w_2}\end{pmatrix}M
\begin{pmatrix}w_1&w_2&{\bar w_1}&{\bar w_2}\end{pmatrix}^t\right)\\
\times &
\exp \left(\tfrac{\lambda}{2}V\begin{pmatrix}w_1&w_2&{\bar w_1}&{\bar w_2}\end{pmatrix}^t\right)\,dm(w)\\
&=\int_{{\mathbb R}^{2n}}
\exp \left(-\tfrac{\lambda}{4}
\begin{pmatrix}x_1&y_1&x_2&y_2\end{pmatrix}U^tMU
\begin{pmatrix}x_1&y_1&x_2&y_2\end{pmatrix}^t\right)\\
\times &
\exp \left(\tfrac{\lambda}{2}VU\begin{pmatrix}x_1&y_1&x_2&y_2\end{pmatrix}^t\right)\,
dx_1dy_1dx_2dy_2
\end{align*}
by the change of variables $w_1=x_1+iy_1$, $w_2=x_2+iy_2$, $x_1,y_1\in {\mathbb R}^p$,
$x_2,y_2\in {\mathbb R}^q$ which is also given by
\begin{equation*}
\begin{pmatrix}w_1&w_2&{\bar w_1}&{\bar w_2}\end{pmatrix}^t=U\begin{pmatrix}x_1&y_1&x_2&y_2\end{pmatrix}^t\end{equation*}
where $U$ has been defined just before Lemma \ref{lem:prep3}.

Now, recall the well-known identity for Gaussian integrals
\begin{equation*}\int_{{\mathbb R}^N}\exp (-x{\mathcal A}x+zx)\,dx=(\Det {\mathcal A})^{-1/2}\pi ^{N/2}
\exp\left(\tfrac{1}{4}z({\mathcal A}^{-1}z)\right)
\end{equation*} for $z\in {\mathbb C}^n$ and ${\mathcal A}$ a $(N\times N)$-symmetric complex matrix such that $\Rea({\mathcal A})$ is positive definite.

By Lemma \ref{lem:prep3}, this identity can be applied to $I(k)$. This gives
\begin{equation*}I(k)=\pi^n \left(\tfrac{4}{\lambda}\right)^n (\Det(U^tMU))^{-1/2}
\exp \left( \tfrac{\lambda}{4}VM^{-1}V^t\right).
\end{equation*}
But
\begin{equation*}V^t=\begin{pmatrix}0&
{\overline B}{\overline D}^{-1}&(A^t)^{-1}-I_p&0\\
(D^{\star})^{-1}B^{\star}&0&0&(D^{\star})^{-1}-I_q\\
I_p-A^{-1}&0&0&C^{\star}(D^{\star})^{-1}\\
0&I_q-{\overline D}^{-1}&{\overline D}^{-1}{\overline C}&0\\
\end{pmatrix}
\begin{pmatrix}z_1\\z_2\\ {\bar z_1}\\{\bar z_2}\end{pmatrix},
\end{equation*}
then we get
\begin{equation*}VM^{-1}V^t=\begin{pmatrix}z&
{\bar z}\end{pmatrix}
\begin{pmatrix} -R&2I_n-S\\
S^t-2I_n& T\end{pmatrix}
\begin{pmatrix} R &S\\ S^t & T
\end{pmatrix}^{-1}\begin{pmatrix} -R&S-2I_n\\
2I_n-S^t& T\end{pmatrix}
\begin{pmatrix}z\\
{\bar z}\end{pmatrix}
\end{equation*}
where, as in Lemma \ref{lem:prep2}, we denote
\begin{equation*}R=\begin{pmatrix}0&-{\overline B}{\overline D}^{-1}\\
-(D^{\star})^{-1}B^{\star}& 0\end{pmatrix}; S=\begin{pmatrix}I_p+(A^t)^{-1}&0\\
0& I_q+(D^{\star})^{-1}\end{pmatrix};  T=\begin{pmatrix}0&C^{\star}(D^{\star})^{-1}\\
{\overline D}^{-1}{\overline C}& 0\end{pmatrix}.\end{equation*}
By Lemma \ref{lem:prep1}, we thus obtain
\begin{equation*}VM^{-1}V^t=\begin{pmatrix}z&
{\bar z}\end{pmatrix}\begin{pmatrix} R+4\gamma &4I_n-S-4\beta^t\\
4I_n-S^t-4\beta& T+4\alpha\end{pmatrix}
\begin{pmatrix}z\\
{\bar z}\end{pmatrix},
\end{equation*} 
hence
\begin{equation*}I(k)=\pi^n \left(\tfrac{4}{\lambda}\right)^n (\Det(U^tMU))^{-1/2}
\exp\left(\tfrac{\lambda}{4}\begin{pmatrix}z&
{\bar z}\end{pmatrix}\begin{pmatrix} R+4\gamma &4I_n-S-4\beta^t\\
4I_n-S^t-4\beta& T+4\alpha\end{pmatrix}
\begin{pmatrix}z\\{\bar z}\end{pmatrix}\right).
\end{equation*} 
Returning to the computation of $W(\sigma(k))$, we get
\begin{align*}W&(\sigma(k))(z)=4^n(\Det (A))^{-1}\Det(U^tMU)^{-1/2}\\
\times 
&\exp\left(\tfrac{\lambda}{4}\begin{pmatrix}z&
{\bar z}\end{pmatrix}\begin{pmatrix} -R &S-2I_n\\
S^t-2I_n& -T\end{pmatrix}
\begin{pmatrix}z\\
{\bar z}\end{pmatrix}\right)\\
\times &
\exp\left(\tfrac{\lambda}{4}\begin{pmatrix}z&
{\bar z}\end{pmatrix}\begin{pmatrix} R+4\gamma &4I_n-S-4\beta^t\\
4I_n-S^t-4\beta& T+4\alpha\end{pmatrix}
\begin{pmatrix}z\\
{\bar z}\end{pmatrix}\right).
\end{align*}
By Lemma \ref{lem:prep2} again,
\begin{align*}\Det(U^tMU)&=2^{2n}(\Det (A)\Det({\overline D}))^{-1}
\Det({\tilde k}+I_{2n})\\
&=2^{2n}((\Det (A))^{-2}\vert \Det( k+I_{2n})\vert^2,
\end{align*}
since $\Det (D)=\overline { \Det (A)}$, see \cite{CaN}.

Finally, we find
\begin{equation*}W(\sigma(k))(z)=2^n\vert \Det(k+I_{n})\vert^{-1}
\exp \left(\lambda \begin{pmatrix}z&
{\bar z}\end{pmatrix}\begin{pmatrix}\gamma &\tfrac{1}{2}I_n-\beta^t\\
\tfrac{1}{2}I_n-\beta&\alpha \end{pmatrix}\begin{pmatrix}z\\{\bar z}\end{pmatrix}\right),
\end{equation*} 
as announced.
The second expression for $W(\sigma(k))(z)$ follows from (3) of Lemma \ref{lem:prep2}.
\end{proof}
We can also give a formula for $W(d\sigma(X))$, $X\in su(p,q)$.
By differentiating the map $k\rightarrow W(\sigma(k))$, we can easily verify the following result.

\begin{proposition}\label{prop:Wdsigma} Let $X=\left(\begin{smallmatrix}A&B\\C&D\end{smallmatrix}\right)\in su(p,q)$. Then, for each $z=(z_1,z_2)\in {\mathbb C}^p \times {\mathbb C}^q$, we have
\begin{equation*}W(d\sigma(X))(z)=-\tfrac{\lambda}{2}\bigl(
({\overline C}\bar{z_1})\bar{z_2}+({\overline D}z_2)\bar{z_2}\\
+(A z_1)\bar{z_1}-z_1({\overline B}z_2)\bigr).
\end{equation*}
\end{proposition}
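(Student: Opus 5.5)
The plan is to obtain the formula by differentiating the second expression in Theorem \ref{th:Wsigma} along a one-parameter subgroup. Fix $X=\left(\begin{smallmatrix}A&B\\C&D\end{smallmatrix}\right)\in su(p,q)$ and put $k_t=\exp(tX)$. For $|t|$ small, $k_t$ is close to $I_n$, so $\Det(k_t+I_n)\neq 0$ and Theorem \ref{th:Wsigma} applies. I would then define, or justify,
\begin{equation*}
W(d\sigma(X))=\frac{d}{dt}\Big|_{t=0}W(\sigma(k_t)).
\end{equation*}
The justification comes from the integral formula of Proposition \ref{prop:intW}. The kernel of $d\sigma(X)$ is $\frac{d}{dt}b_{k_t}\big|_{t=0}$. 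Differentiation under the integral sign is legitimate because, for small $t$, the integrands are uniformly dominated by a Gaussian times a polynomial. This uniform domination follows from Lemma \ref{lem:prep3}: the real part of the quadratic form stays positive definite uniformly near $t=0$. This step is the main technical obstacle, though it is routine.

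Next I treat the prefactor. We have $k_t+I_n=2I_n+tX+O(t^2)$, so
\begin{equation*}
|\Det(k_t+I_n)|^{-1}=2^{-n}\bigl(1-\tfrac{t}{2}\Rea\Tr X+O(t^2)\bigr).
\end{equation*}
Since $\Tr X=0$ on $su(p,q)$, the prefactor $2^n|\Det(k_t+I_n)|^{-1}$ equals $1$ at $t=0$ and has zero derivative there. This is why, unlike Theorem \ref{th:rappel}(4), no constant term appears.

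For the exponent, $\tilde k_t=I_{2n}+t\tilde X+O(t^2)$, where $\tilde X=\left(\begin{smallmatrix}P&Q\\ \overline Q&\overline P\end{smallmatrix}\right)$ with $P=\Diag(A,\overline D)$ and $Q=\left(\begin{smallmatrix}0&B\\ \overline C&0\end{smallmatrix}\right)$. Hence $(\tilde k_t+I_{2n})^{-1}(\tilde k_t-I_{2n})=\tfrac{t}{2}\tilde X+O(t^2)$, and
\begin{equation*}
W(d\sigma(X))(z)=\tfrac{\lambda}{4}\begin{pmatrix}z&\bar z\end{pmatrix}J\tilde X\begin{pmatrix}z\\ \bar z\end{pmatrix},\qquad J\tilde X=\begin{pmatrix}\overline Q&\overline P\\ -P&-Q\end{pmatrix}.
\end{equation*}
Expanding this in the block variables $z_1,z_2,\bar z_1,\bar z_2$ gives terms of the types $z\overline Q z$, $z\overline P\bar z$, $\bar zPz$ and $\bar zQ\bar z$. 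I then simplify using the defining relations of $su(p,q)$, namely $A^\star=-A$, $D^\star=-D$ and $C=B^\star$. These show that the two mixed terms coincide, $z(\overline P\bar z)=-\bar z(Pz)$ up to transposition, and likewise $\bar z(Q\bar z)$ pairs with the $\overline C$ term. The result is exactly $-\tfrac{\lambda}{2}\bigl((\overline C\bar z_1)\bar z_2+(\overline Dz_2)\bar z_2+(Az_1)\bar z_1-z_1(\overline Bz_2)\bigr)$, after keeping careful track of the paper's convention $Az=zA^t$.

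As a consistency check, I would compare with Theorem \ref{th:rappel}(4) via Proposition \ref{prop:unitary}, i.e. $S=B^{1/2}W$ with $B^{1/2}=\exp(\Delta/4\lambda)$. Applying $\exp(\Delta/4\lambda)$ to the quadratic polynomial above adds only the constant $\tfrac{1}{\lambda}\sum_k\partial^2/\partial z_k\partial\bar z_k$ of it. That constant is $-\tfrac12(\Tr A+\Tr\overline D)=-\Tr A$, using $\Tr A=-\Tr D=\overline{\Tr D}\cdot(-1)$ from tracelessness and skewness. This reproduces $S(d\sigma(X))$, and it can serve as an independent second proof.
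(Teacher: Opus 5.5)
Your proposal is correct and takes the same route as the paper, which obtains the formula simply by differentiating the second expression of Theorem \ref{th:Wsigma} along $k_t=\exp(tX)$. Your expansion $(\tilde k_t+I_{2n})^{-1}(\tilde k_t-I_{2n})=\tfrac{t}{2}\tilde X+O(t^2)$, the vanishing derivative of the prefactor, and the block simplification supply exactly the details the paper leaves to the reader; one small slip in your consistency check is that the trace relation should read $\Tr A=-\Tr D=\overline{\Tr D}$ (this is what gives $\Tr\overline D=\Tr A$), and your conclusion $-\Tr A$ is correct.
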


\section{The extended harmonic representation} \label{sec:5}

We retain the notation from Section \ref{sec:4}. We consider the semi-direct product
$G:=H_n\rtimes SU(p,q)$ with respect to the action of $SU(p,q)$ on $H_n$ introduced in Section \ref{sec:4}. The elements of $G$ can be written as $((z,{\bar z}),c,k)$ where
$z\in {\mathbb C}^n$, $c\in \mathbb R$ and $k\in SU(p,q)$ and the multiplication of $G$ is given by
\begin{equation*}((z,{\bar
z}),c,k)\cdot ((z',{\bar z'}),c',k')=((z,{\bar z})+k(z',{\bar
z'}),c+c'+\tfrac{1}{2}\omega ((z,{\bar z}),k(z',{\bar
z'})),kk').\end{equation*}

By analogy with the extended metaplectic representation, \cite[p. 196]{Fo}, we define
the extended harmonic representation to be the unitary representation $\pi$ of $G$ on $\mathcal F$ defined by $\pi(h,k)=\rho(h)\sigma(k)$. The fact that $\pi$ is actually a representation is an immediate consequence of the commutation relation $\rho_k(h)\sigma(h)=
\sigma(k)\rho(h)$ for $h
\in H_n$ and $k\in SU(p,q)$, see Theorem \ref{th:rappel}. Moreover, $\pi$ is irreducible since
$\rho$ is.

Now we give a formula for $S(\pi(g))$, $g\in G$.

\begin{proposition} \label{prop:Spi}
Let $k=\left(\begin{smallmatrix}A&B\\C&D\end{smallmatrix}\right)\in SU(p,q)$ and $h=
((z_0,{\bar z_0}),c_0)\in H_n$. Let $g=(h,k)\in G$. Then, for each $z=(z_1,z_2)\in {\mathbb C}^p \times {\mathbb C}^q$, we have
\begin{align*}S(&\pi(g))(z)=(\Det (A))^{-1}\exp \left( i\lambda c_0-\tfrac{\lambda}{4}\vert z\vert ^2+\tfrac{\lambda}{2}z\overline {s(z_0)}\right) \\
&\times \exp \left( \tfrac{\lambda}{2}\left(
-({\overline D}^{-1}{\overline C}{\bar z_1}){\bar z_2}+({\overline D}^{-1}z_2){\bar z_2}+(A^{-1}z_1){\bar z_1}+z_1({\overline B}{\overline D}^{-1} z_2)\right)\right)\\
&\times \exp \left( \tfrac{\lambda}{2}\left(z_0^+({\overline B}{\overline D}^{-1}{\bar z^-_0})
-({\overline D}^{-1}{\bar z^-_0}){\bar z_2}-(A^{-1}z_0^+){\bar z_1}
-z_0^+({\overline B}{\overline D}^{-1} z_2)-z_1({\overline B}{\overline D}^{-1}{\bar z_0^-})\right)\right).
\end{align*}
\end{proposition}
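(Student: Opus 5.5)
Since $\pi(g)=\rho(h)\sigma(k)$, the plan is to compute the Berezin symbol directly from its definition:
\begin{equation*}S(\pi(g))(z)=\frac{\langle \rho(h)\sigma(k)e_z,e_z\rangle}{\langle e_z,e_z\rangle}=\frac{\langle \sigma(k)e_z,\rho(h)^{-1}e_z\rangle}{\langle e_z,e_z\rangle}.\end{equation*}
First, $\rho(h)^{-1}=\rho(h^{-1})$ with $h^{-1}=((-z_0,-\bar z_0),-c_0)$. Applying Eq. (\ref{eq:rez}) to $h^{-1}$ gives
\begin{equation*}\rho(h)^{-1}e_z=\exp\left(-i\lambda c_0+\tfrac{\lambda}{2}s(z_0)\bar z-\tfrac{\lambda}{4}\vert z_0\vert^2\right)e_{z-s(z_0)}.\end{equation*}
Conjugating this scalar and pulling it out of the inner product, we are reduced to
\begin{equation*}S(\pi(g))(z)=e^{i\lambda c_0+\frac{\lambda}{2}\overline{s(z_0)}z-\frac{\lambda}{4}\vert z_0\vert^2}\,\frac{\langle \sigma(k)e_z,e_{z-s(z_0)}\rangle}{\langle e_z,e_z\rangle}.\end{equation*}

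Next, by the reproducing property the remaining inner product is a value of the kernel of Theorem \ref{th:rappel}: $\langle\sigma(k)e_z,e_u\rangle=k_{\sigma(k)}(u,z)=b_k(u,z)$ with $u=z-s(z_0)$. Writing $u=(u_1,u_2)$ with $u_1=z_1-z_0^+$ and $u_2=z_2-\bar z_0^-$, we substitute into
\begin{equation*}b_k(u,z)=(\Det A)^{-1}\exp\tfrac{\lambda}{2}\left(-({\overline D}^{-1}{\overline C}\bar z_1)\bar z_2+({\overline D}^{-1}u_2)\bar z_2+(A^{-1}u_1)\bar z_1+u_1({\overline B}{\overline D}^{-1}u_2)\right)\end{equation*}
and divide by $\langle e_z,e_z\rangle=e^{\lambda\vert z\vert^2/2}$.

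The last step is to expand bilinearly. The terms containing only $z$ reproduce the second exponential line of the statement, together with the $-\tfrac{\lambda}{2}\vert z\vert^2$ normalization. The terms linear in $z_0$ give $-({\overline D}^{-1}\bar z_0^-)\bar z_2-(A^{-1}z_0^+)\bar z_1-z_0^+({\overline B}{\overline D}^{-1}z_2)-z_1({\overline B}{\overline D}^{-1}\bar z_0^-)$. The quadratic term is $z_0^+({\overline B}{\overline D}^{-1}\bar z_0^-)$. Collected with the prefactor $e^{i\lambda c_0+\frac{\lambda}{2}z\overline{s(z_0)}}$, these match the statement.

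The only real obstacle is bookkeeping, and I would check two points of it carefully. The first is the Gaussian normalizations: the $-\tfrac{\lambda}{4}\vert z_0\vert^2$ from Eq. (\ref{eq:rez}) combines with $-\tfrac{\lambda}{2}\vert z\vert^2$. The resulting $\vert z\vert^2$ and $\vert z_0\vert^2$ coefficients must be compared against the displayed $-\tfrac{\lambda}{4}\vert z\vert^2$, and the convention in Eq. (\ref{eq:rez}) may need rechecking there. The second is the conjugation convention, namely which argument of the kernel $k_A(z,w)=\langle Ae_w,e_z\rangle$ is holomorphic, so that $\bar z$, and not $\bar u$, appears in the antiholomorphic slots.
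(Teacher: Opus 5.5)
Your proposal is correct and follows the paper's own proof. Both move $\rho(h)$ across by unitarity, apply Eq.~(\ref{eq:rez}) to $h^{-1}$, identify $\langle\sigma(k)e_z,e_{z-s(z_0)}\rangle=b_k(z-s(z_0),z)$ (your kernel convention is the right one), and then expand bilinearly.

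The discrepancy you flagged is real, but it lies in the displayed statement, not in Eq.~(\ref{eq:rez}), which checks out. Your computation correctly gives $-\tfrac{\lambda}{2}\vert z\vert^2-\tfrac{\lambda}{4}\vert z_0\vert^2$ in the first exponential. The paper's proof itself carries $-\tfrac{\lambda}{4}\vert z_0\vert^2$, and at $z_0=0$ the formula must reduce to $e^{i\lambda c_0}S(\sigma(k))$ from Theorem~\ref{th:rappel}(3), which has $-\tfrac{\lambda}{2}\vert z\vert^2$. So the printed $-\tfrac{\lambda}{4}\vert z\vert^2$ is a misprint, and you should say so explicitly rather than assert that your expansion matches the statement.
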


\begin{proof} Let $k=\left(\begin{smallmatrix}A&B\\C&D\end{smallmatrix}\right)\in SU(p,q)$ and $h=((z_0,{\bar z_0}),c_0)\in H_n$. By using Equation  \ref{eq:rez}, we get
\begin{align*}\langle \pi(h,k)&e_z,e_z \rangle=\langle \sigma(k)e_z,\rho(h)^{-1}e_z \rangle\\
=&\exp \left(i\lambda c_0-\tfrac{\lambda}{4}\vert z_0\vert^2+\tfrac{\lambda}{2}z \overline { s(z_0)}\right)\langle \sigma(k)e_z,e_{z-s(z_0)} \rangle\\
=&\exp \left(i\lambda c_0-\tfrac{\lambda}{4}\vert z_0\vert^2+\tfrac{\lambda}{2}z\overline{ s(z_0)}\right) b_k(z-s(z_0),z).
\end{align*}
The result hence follows from the formula for $b_k$ given in Theorem \ref{th:rappel}.
\end{proof}

Recall that for $k=\left(\begin{smallmatrix}A&B\\C&D\end{smallmatrix}\right)\in SU(p,q)$
we denote 
\begin{equation*}{\tilde k}=\begin{pmatrix}A&0&0&B\\
0&{\overline D}&{\overline C}&0\\
0&{\overline B}&{\overline A}&0\\
C&0&0&D\\
\end{pmatrix},\end{equation*} 
see Section \ref{sec:4}.

\begin{lemma} \label{lem:prepth} \begin{enumerate}
\item Let $k\in SU(p,q)$ and $z\in {\mathbb C}^n$. Let $z'=(k-I_n)z$. Then we have
\begin{equation*}\begin{pmatrix}s(z')\\ \overline{s(z')}\end{pmatrix}=({\tilde k}-I_{2n})
\begin{pmatrix}s(z)\\ \overline{s(z)}\end{pmatrix}.
\end{equation*}
\item For each $k\in SU(p,q)$, we have ${\tilde k}J{\tilde k}^t=J$ and ${\tilde k}^tJ{\tilde k}=J$. Moreover, if $\varphi$ is an analytic function on a domain of $M_{2n}({\mathbb C})$ which contains ${\tilde k}^t$ and ${\tilde k}^{-1}$ for some $k\in SU(p,q)$, then we have
$\varphi({\tilde k}^t)J=J\varphi({\tilde k}^{-1})$.
\end{enumerate}\end{lemma}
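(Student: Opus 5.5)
The plan is to verify both parts by direct block-matrix computations that use only the $SU(p,q)$ relations listed at the beginning of Section \ref{sec:4}. For the analytic function statement in (2), I would reduce to an intertwining relation $\tilde k^tJ=J\tilde k^{-1}$ and then extend it to analytic functions by a holomorphic functional calculus.

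\textbf{Step (1).} Write $z=(z^+,z^-)$, so that $s(z)=(z^+,\bar z^-)$ and $\overline{s(z)}=(\bar z^+,z^-)$. Applying $\tilde k$ block by block to the column $(z^+,\bar z^-,\bar z^+,z^-)^t$ gives
\begin{equation*}
\bigl(Az^++Bz^-,\ \overline{C}\bar z^++\overline{D}\bar z^-,\ \overline{A}\bar z^++\overline{B}\bar z^-,\ Cz^++Dz^-\bigr)^t .
\end{equation*}
Since $kz=(Az^++Bz^-,\,Cz^++Dz^-)$, this column is exactly $(s(kz),\overline{s(kz)})^t$. The maps $z\mapsto s(z)$ and $z\mapsto\overline{s(z)}$ are $\mathbb R$-linear. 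So subtracting the identity case ($k=I_n$) gives the formula for $z'=(k-I_n)z$ with $\tilde k-I_{2n}$.

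\textbf{Step (2a): $\tilde kJ\tilde k^t=J$.} Write $\tilde k=\bigl(\begin{smallmatrix}P&Q\\ \overline Q&\overline P\end{smallmatrix}\bigr)$ with $P,Q$ as in Lemma \ref{lem:prep2}. A direct block multiplication gives
\begin{equation*}
\tilde kJ\tilde k^t=\begin{pmatrix}PQ^t-QP^t & P\overline P^{\,t}-Q\overline Q^{\,t}\\ \overline Q Q^t-\overline P P^t & \overline Q\,\overline P^{\,t}-\overline P\,\overline Q^{\,t}\end{pmatrix}.
\end{equation*}
I then check the two independent blocks; the lower blocks are conjugates or negative transposes of these.
\begin{enumerate}
\item The top-left block vanishes. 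Here $PQ^t=\bigl(\begin{smallmatrix}0&AC^{\star}\\ \overline D B^t&0\end{smallmatrix}\bigr)$ and $QP^t=\bigl(\begin{smallmatrix}0&BD^{\star}\\ \overline C A^t&0\end{smallmatrix}\bigr)$. These agree by $AC^{\star}=BD^{\star}$ and by its conjugate-transpose-conjugate $\overline C A^t=\overline D B^t$.
\item The top-right block is the identity. It equals $\Diag\bigl(AA^{\star}-BB^{\star},\ \overline{DD^{\star}-CC^{\star}}\bigr)=I_n$, by the first row of relations.
\end{enumerate}
The bookkeeping of which relation is conjugated or transposed is the only real obstacle in the lemma, and it is routine.

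\textbf{Step (2b): $\tilde k^tJ\tilde k=J$ and the analytic statement.} Invert the identity $\tilde kJ\tilde k^t=J$ and use $J^{-1}=-J$. This gives $(\tilde k^t)^{-1}J\tilde k^{-1}=J$, which is equivalent to $\tilde k^tJ\tilde k=J$. Rewriting the latter, we get $\tilde k^tJ=J\tilde k^{-1}$. In particular $\tilde k^t=J\tilde k^{-1}J^{-1}$, so $\tilde k^t$ and $\tilde k^{-1}$ are similar and have the same spectrum.

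By induction, $(\tilde k^t)^mJ=J\tilde k^{-m}$ for all $m$, so $p(\tilde k^t)J=Jp(\tilde k^{-1})$ for every polynomial $p$. The same computation gives $(\zeta-\tilde k^t)^{-1}J=J(\zeta-\tilde k^{-1})^{-1}$ for every $\zeta$ outside the common spectrum. For $\varphi$ analytic near that spectrum, I would define $\varphi(\cdot)$ by the Cauchy integral
\begin{equation*}
\varphi(X)=\frac{1}{2\pi i}\oint\varphi(\zeta)(\zeta-X)^{-1}\,d\zeta .
\end{equation*}
Integrating the resolvent identity along the contour then yields $\varphi(\tilde k^t)J=J\varphi(\tilde k^{-1})$. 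Equivalently, one can use $\varphi(TXT^{-1})=T\varphi(X)T^{-1}$ with $T=J$ and $X=\tilde k^{-1}$.
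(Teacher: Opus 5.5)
Your proposal is correct and follows the paper's own route: part (1) by direct block computation, and part (2) by checking $\tilde kJ\tilde k^t=J$ from the $SU(p,q)$ relations and then deducing the rest. The paper dismisses that last deduction as immediate; you fill it in via $\tilde k^tJ=J\tilde k^{-1}$ (with $\tilde k$ invertible since $(\Det\tilde k)^2=1$) and the holomorphic functional calculus, which is the right reading of ``analytic function'' here, and your block computations all check out.
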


\begin{proof} (1) Easy computation. (2) For $k\in SU(p,q)$, we can verify that ${\tilde k}J{\tilde k}^t=J$ by a direct computation based on the relations on $A,B,C$ and $D$ given
at the beginning of Section \ref{sec:4}. The rest of the lemma follows immediately.
\end{proof}

\begin{theorem} \label{th:Wpi} Let $g=((z_0,{\bar z_0}),c_0,k)\in G$ such that $k+I_n$ is invertible. Then, for each $z\in 
{\mathbb C}^n$, we have 
\begin{align*}W&(\pi(g))(z)=2^ne^{i\lambda c_0}\vert \Det (k+I_n)\vert^{-1}
\exp \left(\tfrac{\lambda}{2}
\begin{pmatrix}z&{\overline z}\end{pmatrix}J({\tilde k}+I_{2n})^{-1}({\tilde k}-I_{2n})
\begin{pmatrix}z\\{\overline z}\end{pmatrix}\right)\\
\times &
\exp \left(\lambda 
\begin{pmatrix}z&{\overline z}\end{pmatrix}J({\tilde k}+I_{2n})^{-1}
\begin{pmatrix}s(z_0)\\ \overline {s(z_0)}\end{pmatrix}\right)\\
\times &
\exp \left(-\tfrac{\lambda}{4}
\begin{pmatrix}s(z_0)&\overline {s(z_0)}\end{pmatrix}J({\tilde k}+I_{2n})^{-1}
\begin{pmatrix}s(z_0)\\ \overline {s(z_0)}\end{pmatrix}\right).
\end{align*}\end{theorem}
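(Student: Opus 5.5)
The plan is to reduce Theorem \ref{th:Wpi} to Theorem \ref{th:Wsigma} through the covariance of $W$ with respect to $\rho$ (Proposition \ref{prop:covWrho}). Write $\pi(g)=\rho(h)\sigma(k)$ with $h=((z_0,\bar z_0),c_0)$. For any $h'\in H_n$, the intertwining relation of Theorem \ref{th:rappel}(2) gives $\sigma(k)\rho(h')=\rho(k\cdot h')\sigma(k)$. Hence
\begin{equation*}\rho(h')^{-1}\pi(g)\rho(h')=\rho\bigl(h'^{-1}\,h\,(k\cdot h')\bigr)\sigma(k).\end{equation*}
I will choose $h'=((z',\bar z'),0)$ so that the $\mathbb C^n$-component of $h'^{-1}h(k\cdot h')$ vanishes. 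That component is $z_0+(k-I_n)z'$, so I take $z'=-(k+I_n-2I_n)^{-1}z_0$, that is $z'=-(k-I_n)^{-1}z_0$.

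This choice needs $k-I_n$ invertible, which is not among the hypotheses. I will therefore first treat the generic case where both $k\pm I_n$ are invertible. Both sides of the final formula are analytic in $k$ on the set where $k+I_n$ is invertible, so the general case will follow by density and continuity. In the generic case $h'^{-1}h(k\cdot h')=((0,0),c)$ is central, with $c=c_0+\tfrac12\omega$-terms built from $z_0$ and $z'$. Hence $\rho(h')^{-1}\pi(g)\rho(h')=e^{i\lambda c}\sigma(k)$, and Proposition \ref{prop:covWrho} yields
\begin{equation*}W(\pi(g))(z)=e^{i\lambda c}\,W(\sigma(k))\bigl(z-s(z')\bigr).\end{equation*}

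Next I insert the second expression of Theorem \ref{th:Wsigma} at the point $z-s(z')$ and expand the quadratic form. Write $v=\begin{pmatrix}z\\ \bar z\end{pmatrix}$, $u=\begin{pmatrix}s(z_0)\\ \overline{s(z_0)}\end{pmatrix}$ and $K=J(\tilde k+I_{2n})^{-1}(\tilde k-I_{2n})$. By Lemma \ref{lem:prepth}(1), applied to the relation $(k-I_n)z'=-z_0$, we get
\begin{equation*}\begin{pmatrix}s(z')\\ \overline{s(z')}\end{pmatrix}=-(\tilde k-I_{2n})^{-1}u.\end{equation*}
Substituting this, the cross term $-2\,v^tK(\tilde k-I_{2n})^{-1}\cdot(-u)$ should become $2v^tJ(\tilde k+I_{2n})^{-1}u$. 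For this I need $K$ symmetric, so that the two cross terms combine. Symmetry of $K$ follows from Lemma \ref{lem:prepth}(2) applied to the Cayley transform: $\varphi(\tilde k^t)J=J\varphi(\tilde k^{-1})$ with $\varphi(x)=(x-1)(x+1)^{-1}$, together with $\varphi(x^{-1})=-\varphi(x)$ and $J^t=-J$. Multiplied by $\tfrac{\lambda}{2}$, this cross term gives the middle exponential of the statement.

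The constant term is $\tfrac{\lambda}{2}\,u^t(\tilde k-I_{2n})^{-t}J(\tilde k+I_{2n})^{-1}u$, to which I add $\lambda(c-c_0)$ from the phase. The phase part must be rewritten as a quadratic form in $u$ as well: $\omega$ on $(z_0,\bar z_0)$ and $(z',\bar z')$ is essentially $u^tJ(\tilde k-I_{2n})^{-1}u$ up to constants, again via the $s$-map of Lemma \ref{lem:prepth}(1). The goal is to show that the two contributions sum to $-\tfrac{\lambda}{4}u^tJ(\tilde k+I_{2n})^{-1}u$. I would do this by symmetrizing, since only the symmetric part of each matrix matters in $u^t(\cdot)u$, and then using the identity $(x-1)^{-1}-\tfrac12\bigl[(x-1)^{-1}\text{-type terms}\bigr]$ reduces to $(x+1)^{-1}$ through the partial fractions
\begin{equation*}\frac{1}{(x-1)(x+1)}=\tfrac12\bigl((x-1)^{-1}-(x+1)^{-1}\bigr),\end{equation*}
combined with the transposition rule of Lemma \ref{lem:prepth}(2).

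I expect this last step to be the main obstacle. It requires getting the exact constant and the sign conventions in $\omega$, in $s$, and in the Heisenberg multiplication all right, so that the $(\tilde k-I_{2n})^{-1}$ terms cancel and only $(\tilde k+I_{2n})^{-1}$ remains.
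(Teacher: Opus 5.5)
Your proposal is correct and is essentially the paper's proof. The paper also conjugates by the Heisenberg element built from $z_1=(k-I_n)^{-1}z_0$ (your $z'=-z_1$), evaluates $W(\sigma(k))$ at $z+s(z_1)$ via Proposition \ref{prop:covWrho}, and treats the quadratic, cross, constant and phase terms with Lemma \ref{lem:prepth}. The step you flag closes as you expect: the phase exponent is $i\lambda(c-c_0)=\tfrac{\lambda}{4}u^tJ(\tilde k-I_{2n})^{-1}u$ (note the factor $i$, which you dropped), and $(\tilde k^t-I_{2n})^{-1}J=-J\tilde k(\tilde k-I_{2n})^{-1}$, so the two constants sum to $\tfrac{\lambda}{4}u^tJ(\tilde k-I_{2n})^{-1}(\tilde k+I_{2n})^{-1}(I_{2n}-\tilde k)u=-\tfrac{\lambda}{4}u^tJ(\tilde k+I_{2n})^{-1}u$.

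Your density argument for $\Det(k-I_n)=0$ covers a case the paper passes over silently. To make it rigorous, you still need continuity in $k$ of the left-hand side; this follows from the Gaussian integral of Proposition \ref{prop:intW}, whose quadratic part is the same as for $\sigma(k)$ and has positive definite real part by Lemma \ref{lem:prep3}.
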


\begin{proof} By using covariance of $W$ with respect to $\rho$ (see Section \ref{sec:3}),
we see that, for each $h\in H_n$, $k\in SU(p,q)$ and $z\in {\mathbb C}^n$ we have
\begin{equation*}W(\rho(h)^{-1}\sigma(k)\rho(h))(z)=W(\sigma(k))(h\cdot z)
\end{equation*} or, equivalently,
\begin{equation*}W(\rho(h^{-1}(k\cdot h))\sigma(k))(z)=W(\sigma(k))(h\cdot z).
\end{equation*}
We aim to use this relation to deduce a formula for $W(\pi(g))$ from the formula for
$W(\sigma(k))$ given by Theorem \ref{th:Wsigma}. 

Let $k\in SU(p,q)$ such that $k+I_n$ is invertible. Let $z_0\in  {\mathbb C}^n$. Assume,
moreover, that $k-I_n$ is also invertible and define $z_1:=(k-I_n)^{-1}z_0$ and
$h:=((z_1,{\bar z_1}),0)$. Then we have
\begin{align*}h^{-1}(k\cdot h)=&((-z_1,-{\bar z_1}),0)((kz_1,\overline {kz_1}),0)\\
=&((z_0,{\bar z_0}),-\tfrac{1}{2}\omega ((z_1,{\bar z_1}),(kz_1,\overline {kz_1})).
\end{align*}
Let us denote $c:=-\tfrac{1}{2}\omega ((z_1,{\bar z_1}),(kz_1,\overline {kz_1})$.
Then we have
\begin{equation*}W(\rho((z_0, {\bar z_0}),0)\sigma(k))(z)=
e^{-i\lambda c}W(\sigma(k))(h\cdot z).
\end{equation*}
By Theorem \ref{th:Wsigma}, we get
\begin{align*}W&(\rho((z_0,{\bar z_0}),0)\sigma(k))(z)=
2^n\vert \Det(k+I_{n})\vert^{-1}e^{-i\lambda c}\\
&\times 
\exp \left( \tfrac{\lambda}{2}\begin{pmatrix}z+s(z_1)&
\overline {z+s(z_1)}\end{pmatrix}J
({\tilde k}+I_{2n})^{-1}({\tilde k}-I_{2n})\begin{pmatrix}z+s(z_1)\\ \overline {z+s(z_1)}\end{pmatrix}\right).
\end{align*}
We are led to consider the following functions
\begin{align*}\varphi_1(z,k):=&\tfrac{\lambda}{2}
\begin{pmatrix}z&{\bar z}\end{pmatrix}J
({\tilde k}+I_{2n})^{-1}({\tilde k}-I_{2n})\begin{pmatrix}z\\{\bar z}\end{pmatrix};\\
\varphi_2(z,k):=&\tfrac{\lambda}{2}
\begin{pmatrix}s(z_1)&\overline {s(z_1)}\end{pmatrix}J
({\tilde k}+I_{2n})^{-1}({\tilde k}-I_{2n})\begin{pmatrix}z\\{\bar z}\end{pmatrix};\\
\varphi_3(z,k):=&\tfrac{\lambda}{2}
\begin{pmatrix}z&{\bar z}\end{pmatrix}J
({\tilde k}+I_{2n})^{-1}({\tilde k}-I_{2n})\begin{pmatrix}s(z_1)\\ \overline{ s(z_1)}\end{pmatrix};\\
\varphi_4(z,k):=&\tfrac{\lambda}{2}
\begin{pmatrix}s(z_1)&\overline {s(z_1)}\end{pmatrix}
J({\tilde k}+I_{2n})^{-1}({\tilde k}-I_{2n})\begin{pmatrix}s(z_1)\\ \overline{ s(z_1)}\end{pmatrix};\\
\varphi_5(z,k):=&-i\lambda c=\tfrac{\lambda}{2}i\omega ((z_1,{\bar z_1}),(kz_1,\overline {kz_1})).
\end{align*}
Note that
\begin{align*}\varphi_2(z,k)&=\tfrac{\lambda}{2}\begin{pmatrix}z&{\bar z}\end{pmatrix}({\tilde k}^t-I_{2n})({\tilde k}^t+I_{2n})^{-1}J^t\begin{pmatrix}s(z_1)\\ \overline{ s(z_1)}\end{pmatrix}\\
&=-\tfrac{\lambda}{2}\begin{pmatrix}z&{\bar z}\end{pmatrix}J({\tilde k}^{-1}-I_{2n})
({\tilde k}^{-1}+I_{2n})^{-1}\begin{pmatrix}s(z_1)\\ \overline{ s(z_1)}\end{pmatrix}\\
&=-\tfrac{\lambda}{2}\begin{pmatrix}z&{\bar z}\end{pmatrix}J(I_{2n}-{\tilde k})
(I_{2n}+{\tilde k})^{-1}\begin{pmatrix}s(z_1)\\ \overline{ s(z_1)}\end{pmatrix}
\end{align*}
by Lemma \ref{lem:prepth}. This implies that
\begin{align*}
\varphi_2(z,k)+\varphi_3(z,k)=&\lambda \begin{pmatrix}z&{\bar z}\end{pmatrix}
J({\tilde k}+I_{2n})^{-1}({\tilde k}-I_{2n})\begin{pmatrix}s(z_1)\\ \overline{ s(z_1)}\end{pmatrix}\\
=&\lambda \begin{pmatrix}z&{\bar z}\end{pmatrix}
J({\tilde k}+I_{2n})^{-1}\begin{pmatrix}s(z_0)\\ \overline{ s(z_0)}\end{pmatrix}
\end{align*}
by Lemma \ref{lem:prepth} again.

On the other hand, we have
\begin{align*}
\varphi_4(z,k)=&\tfrac{\lambda}{2}\begin{pmatrix}s(z_0)&\overline {s(z_0)}\end{pmatrix}
({\tilde k}^{t}-I_{2n})^{-1}J({\tilde k}+I_{2n})^{-1}\begin{pmatrix}s(z_0)\\ \overline{ s(z_0)}\end{pmatrix}\\
=&\tfrac{\lambda}{2}\begin{pmatrix}s(z_0)&\overline {s(z_0)}\end{pmatrix}J
{\tilde k}(I_{2n}-{\tilde k})^{-1}({\tilde k}+I_{2n})^{-1}\begin{pmatrix}s(z_0)\\ \overline{ s(z_0)}\end{pmatrix}
\end{align*} and also
\begin{align*}\varphi_5(z,k)=&\tfrac{\lambda}{2}
i\omega ((z_1,{\bar z_1}),(kz_1,\overline {kz_1})-(z_1,{\bar z_1}))\\
=&\tfrac{\lambda}{2}
i\omega ((z_1,{\bar z_1}),(z_0,{\bar z_0}))\\
=&-\tfrac{\lambda}{4}(z_1^+{\bar z_0}^+-z_1^-{\bar z_0}^--z_0^+{\bar z_1}^++
z_0^-{\bar z_1}^-)\\
=&\tfrac{\lambda}{4}\begin{pmatrix}s(z_0)&\overline {s(z_0)}\end{pmatrix}J
\begin{pmatrix}s(z_1)\\ \overline{ s(z_1)}\end{pmatrix}\\
=&\tfrac{\lambda}{4}\begin{pmatrix}s(z_0)&\overline {s(z_0)}\end{pmatrix}J({\tilde k}-I_{2n})^{-1}
\begin{pmatrix}s(z_0)\\ \overline{ s(z_0)}\end{pmatrix}.
\end{align*}
Hence we get
\begin{align*}\varphi_4(z,k)&+\varphi_5(z,k)\\
=&\tfrac{\lambda}{4}\begin{pmatrix}s(z_0)& \overline{ s(z_0)}\end{pmatrix}J
(I_{2n}-2{\tilde k}({\tilde k}+I_{2n})^{-1})({\tilde k}-I_{2n})^{-1}
\begin{pmatrix}s(z_0)\\ \overline{ s(z_0)}\end{pmatrix}\\
=&-\tfrac{\lambda}{4}\begin{pmatrix}s(z_0)& \overline{ s(z_0)}\end{pmatrix}J
({\tilde k}+I_{2n})^{-1}\begin{pmatrix}s(z_0)\\ \overline{ s(z_0)}\end{pmatrix}.
\end{align*}
By putting all these relations together and observing that
\begin{equation*}W(\rho((z_0,{\bar z_0}),0)\sigma(k))(z)=
2^n\vert \Det(k+I_{n})\vert^{-1}
\exp \left( \sum_{j=1}^{j=5}\varphi_j(z,k)\right),
\end{equation*}
we obtain the result.
\end{proof}

\noindent {\bf Example.} We take $p=q=1$. Let $k=\left(\begin{smallmatrix} e^{it}&0\\
0&e^{-it}\end{smallmatrix}\right)$ where $t\in {\mathbb R}$.
Let $g=((z_0,{\bar z_0}),c_0,k)$ where $z_0\in {\mathbb C}^2$, $c_0\in {\mathbb R}$.
Then we have ${\tilde k}=\left(\begin{smallmatrix} e^{it}I_2&0\\
0&e^{-it}I_2\end{smallmatrix}\right)$ and
\begin{equation*}({\tilde k}+I_4)^{-1}({\tilde k}-I_4)=
\begin{pmatrix} i\tan (\tfrac{1}{2}t)I_2&0\\
0&-i\tan (\tfrac{1}{2}t)I_2\end{pmatrix}.\end{equation*}
In this case, Theorem \ref{th:Wpi} gives
\begin{align*}W&(\pi(g))(z)=2(1+\cos(t))^{-1}e^{i\lambda c_0}\exp \left(
-i\lambda \vert z \vert^2\tan (\tfrac{1}{2}t)\right)\\
\times & \exp \left(\lambda (1+e^{it})^{-1}(e^{it}z\overline{ s(z_0)}-{
\bar z}s(z_0))\right) \exp \left(-i\tfrac{\lambda}{4}\vert z_0 \vert^2\tan (\tfrac{1}{2}t)
\right). \end{align*}

We can also give a formula for $W(d\pi(X))$, $X\in {\mathfrak g}$.

\begin{proposition} Let $Y=\left(\begin{smallmatrix} A&B\\
C&D\end{smallmatrix}\right)\in su(p,q)$, $z_0\in {\mathbb C}^n$, $c_0\in {\mathbb R}$.
Let $X=((z_0,{\bar z_0}),c_0, Y)\in {\mathfrak g}$. Then, for each $z=(z_1,z_2)\in {\mathbb C}^p \times {\mathbb C}^q$, we have
\begin{align*}S&(d\pi(X))(z)=\tfrac{\lambda}{2}(\overline{s(z_0)}z-{\bar z}s(z_0))+i\lambda c_0-\Tr(A)\\&
-\tfrac{\lambda}{2}\bigl(
({\overline C}\bar{z_1})\bar{z_2}+({\overline D}z_2)\bar{z_2}
+(A z_1)\bar{z_1}-z_1({\overline B}z_2)\bigr). \end{align*}
and
\begin{align*}W&(d\pi(X))(z)=\tfrac{\lambda}{2}(\overline{ s(z_0)}z-{\bar z}s(z_0))+i\lambda c_0\\&
-\tfrac{\lambda}{2}\bigl(
({\overline C}\bar{z_1})\bar{z_2}+({\overline D}z_2)\bar{z_2}
+(A z_1)\bar{z_1}-z_1({\overline B}z_2)\bigr). \end{align*}
\end{proposition}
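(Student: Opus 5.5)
The plan is to use linearity: since $d\pi(X)=d\rho((z_0,\bar z_0),c_0)+d\sigma(Y)$, both $S$ and $W$ can be computed separately on the Heisenberg part and on the $su(p,q)$ part. For the $su(p,q)$ part, Theorem \ref{th:rappel}, (4) gives $S(d\sigma(Y))$ and Proposition \ref{prop:Wdsigma} gives $W(d\sigma(Y))$. Both agree with the claimed expressions, the $S$ version carrying the extra constant $-\Tr(A)$. So it remains to treat $X_0=((z_0,\bar z_0),c_0,0)$.

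For the Heisenberg part I will differentiate the explicit formula for $\rho_\lambda$ along $t\mapsto ((tz_0,t\bar z_0),tc_0)$. This one-parameter family is a one-parameter subgroup, because $\omega((z_0,\bar z_0),(z_0,\bar z_0))=0$. Differentiating at $t=0$ should give
\begin{equation*}
(d\rho(X_0)f)(z)=\bigl(i\lambda c_0+\tfrac{\lambda}{2}\overline{s(z_0)}z\bigr)f(z)-s(z_0)\cdot\nabla f(z).
\end{equation*}
Applying this to $e_z$ and using $\nabla e_z(w)=\tfrac{\lambda}{2}\bar z\,e_z(w)$, we get $d\rho(X_0)e_z(w)=\bigl(i\lambda c_0+\tfrac{\lambda}{2}\overline{s(z_0)}w-\tfrac{\lambda}{2}\bar z s(z_0)\bigr)e_z(w)$. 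Pairing with $e_z$ through the reproducing property and dividing by $\langle e_z,e_z\rangle$ then yields $S(d\rho(X_0))(z)=i\lambda c_0+\tfrac{\lambda}{2}(\overline{s(z_0)}z-\bar z s(z_0))$. Alternatively, one could differentiate Proposition \ref{prop:Spi} at $k=I_n$.

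For $W$, I will use Proposition \ref{prop:unitary}, which says $S=B^{1/2}W$ with $B=\exp(\Delta/2\lambda)$. The symbol just found is affine in $(z,\bar z)$, so $\Delta$ annihilates it and $B^{-1/2}$ acts as the identity. Hence $W(d\rho(X_0))=S(d\rho(X_0))$. The same argument also serves as a consistency check on the $su(p,q)$ part. There $S(d\sigma(Y))$ is a constant plus a quadratic form. Applying $\Delta$ to it involves only the mixed terms $(Az_1)\bar z_1$ and $(\overline D z_2)\bar z_2$, and $-\tfrac{\lambda}{2}\cdot\tfrac{4}{2\lambda}\cdot\tfrac12\bigl(\Tr A+\Tr\overline D\bigr)$ should exactly cancel $-\Tr(A)$. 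This uses $\Tr\overline D=\Tr A^{\ast}\cdot(\ldots)$, i.e.\ $\Tr(A)+\Tr(D)=0$ together with $\Tr(D)$ being minus the conjugate relation from $Y\in su(p,q)$, which gives $\Tr\overline D=\Tr A$.

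The main obstacle is getting signs and factors right in the $\Delta$ correction, specifically whether $B^{-1/2}=\exp(-\Delta/4\lambda)$ removes exactly $-\Tr(A)$. I will check this against Proposition \ref{prop:Wdsigma} to avoid an independent recomputation. A secondary point is justifying that $W$, extended to differential operators with polynomial coefficients via Eq. \eqref{eq:intW}, is linear and still satisfies $S=B^{1/2}W$ on polynomial symbols. As a fallback, I will compute $W(d\rho(X_0))$ directly from Eq. \eqref{eq:intW} with kernel $k(z,w)=(i\lambda c_0+\tfrac{\lambda}{2}\overline{s(z_0)}z-\tfrac{\lambda}{2}\bar w s(z_0))e^{\lambda\bar w z/2}$. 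This reduces to the Gaussian integrals of degree at most one in $w$ and $\bar w$.
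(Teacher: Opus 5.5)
Your proposal is correct and follows the paper's route: split $d\pi(X)=d\rho((z_0,\bar z_0),c_0)+d\sigma(Y)$ by linearity, then quote Theorem \ref{th:rappel}, (4) and Proposition \ref{prop:Wdsigma} for the $su(p,q)$ part. The paper leaves the Heisenberg part implicit; your computation of $S(d\rho)$, together with either the $B^{-1/2}$ argument on an affine symbol or the direct evaluation via Eq. \eqref{eq:intW} (which indeed yields exactly $i\lambda c_0+\tfrac{\lambda}{2}(\overline{s(z_0)}z-\bar z s(z_0))$), correctly fills it in.
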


\begin{proof} For  $X=((z_0,{\bar z_0}),c_0, Y)\in {\mathfrak g}$, we have
\begin{equation*}S(d\pi(X))(z)=S(d\rho((z_0,{\bar z_0}),c_0))(z)+S(d\sigma(Y))(z)\end{equation*} and we obtain the first equality by using Proposition \ref{prop:Wdsigma}.
The second equality can be proved by the same way.
\end{proof}
\vskip 0,5 cm
\textit {Acknowledgements.} I would like to thank the referee for many valuable comments and for pointing out some relevant references to me.
\vskip 0,5 cm

\end{document}